\documentclass{amsart}

\usepackage{graphicx}
\usepackage{pstool}
\usepackage{hyperref}
\usepackage{xypic}
\usepackage{epsfig}
\usepackage{amsmath, amssymb, amscd}
\usepackage{tikz}

% THEOREMS -------------------------------------------------------
\newtheorem{thm}{Theorem}[section]

\newtheorem{cor}[thm]{Corollary}
\newtheorem{lem}[thm]{Lemma}

\theoremstyle{definition}
\newtheorem{defn}[thm]{Definition}
\theoremstyle{remark}
\newtheorem{rem}[thm]{Remark}

\numberwithin{equation}{section}
%--------------------------------------------------------------------

\begin{document}

\author{Jongil Park}
\address{Department of Mathematical Sciences, Seoul National University, Seoul 151-747  \& Korea Institute for Advanced Study, Seoul 130-722, Republic of Korea}
\email{jipark@snu.ac.kr}
\urladdr{}

\author{ Ki-Heon Yun}
\address{Department of Mathematics, Sungshin Women's University, Seoul 136-742, Republic of Korea}
\email{kyun@sungshin.ac.kr }
\urladdr{}

\title[Families of nondiffeomorphic $4$-manifolds]{Families of nondiffeomorphic $4$-manifolds with the same Seiberg-Witten invariants}

\keywords{Diffeomorphism type, knot surgery $4$-manifold, Seiberg-Witten invariant}
\subjclass[2010]{57N13, 57R17, 53D35}

\begin{abstract}   % type your abstract below
In this article, we show that, at least for non-simply connected case, there exist an infinite family of nondiffeomorphic symplectic $4$-manifolds with the same Seiberg-Witten invariants. The main techniques are knot surgery and a covering method developed in Fintushel and Stern's paper~\cite{FS:99}.
\end{abstract}

\maketitle

%%%%%%%%%%%%%%%%%%%%   Start of main body of article

\section{Introduction}

 Since the inception of gauge theory, in particular Seiberg-Witten theory, one of the fundamental problems is to find a pair of nondiffeomorphic $4$-manifolds which have the same Seiberg-Witten invariants. Despite of the fact that Seiberg-Witten theory has been very successful in the study of smooth $4$-manifolds, it is still mysterious to find a pair of simply connected nondiffeomorphic $4$-manifolds with the same Seiberg-Witten invariants. One of the main reasons is that we do not have enough techniques to distinguish smooth $4$-manifolds. But the situation is a little bit different in non-simply connected case. For example, Fintushel and Stern constructed a single pair of (non-simply connected) nondiffeomorphic symplectic $4$-manifolds with the same Seiberg-Witten invariants using knot surgery and a covering method~\cite{FS:99}.

Knot surgery, introduced by Fintushel and Stern~\cite{FS:98}, is a very effective method to construct exotic $4$-manifolds because one can modify the Seiberg-Witten invariants without changing the topological type of a given $4$-manifold. Numerous new exotic smooth $4$-manifolds were constructed by using a knot surgery technique. Note that the Seiberg-Witten invariant of a knot surgery $4$-manifold is closely related to the Alexander polynomial of the corresponding knot~\cite{FS:98} and there are infinitely many inequivalent knots with the same Alexander polynomial~\cite{Morton:78}. So it is natural to expect that the Seiberg-Witten invariant alone is not sufficient to distinguish all smooth $4$-manifolds.

In this article we extend Fintushel and Stern's result~\cite{FS:99} using the same technique. That is, we show that there are infinitely many pairs of symplectic $4$-manifolds which share the same Seiberg-Witten invariants but they are not diffeomorphic to each other. Moreover, we get the following theorem:

\smallskip

\begin{thm}~\label{Main Theorem}
For each integer $n > 0$, there exist $n$ distinct (non-simply connected) symplectic $4$-manifolds with the same Seiberg-Witten invariants which are mutually nondiffeomorphic, but homeomorphic.
\end{thm}

In order to construct such families, we first investigate a family of $2$-bridge knots with the same Alexander polynomial which Kanenobu studied~\cite{Kanenobu:89}. Note that each $2$-bridge knot has a dihedral covering link and its covering linkage invariants are well known~\cite{Burde:88}\cite{BZ:03}. So we can apply Fintushel and Stern's covering method to this family of knots. The hard part is to prove that the Seiberg-Witten invariants of the covering link surgery $4$-manifolds are different. The problem is related to distinguish the multivariable Alexander polynomials of the corresponding dihedral covering links. To do this, we compute the covering linkage invariants and we notice that the change of diagonal elements of the linking matrix has some pattern. And then we show that the corresponding multivariable Alexander polynomials of the dihedral covering links are mutually distinct by using this pattern of the change of diagonal element and the Torres' condition of a multivariable Alexander polynomial. So we conclude that the corresponding Seiberg-Witten invariants of the covering link surgery $4$-manifolds are mutually different.

\subsection*{Acknowledgements}
Jongil Park was supported by the National Research Foundation of Korea (NRF) Grant funded by the Korean Government (2008-0093866 and 2010-0019516). He also holds a joint appointment at KIAS and in the Research Institute of Mathematics, SNU.
Ki-Heon Yun was supported by the National Research Foundation of Korea(NRF) Grant funded by the Korean government (2009-0066328 and 2012R1A1B4003427).

\section{Preliminaries}

\subsection{A knot surgery $4$-manifold}

Suppose that $X$ is a simply connected oriented smooth $4$-manifold with $b^+ >1$ which contains a smoothly embedded essential torus $T$ with self-intersection $0$ and $\pi_1(X\setminus T) = 1$. We also assume that $T$ is contained in a cusp neighborhood. Let $K$ be a smooth knot in $S^3$ and let $M_K$ be a $3$-manifold obtained by performing $0$-framed surgery along a knot $K$. Let $m$ be a meridian loop of $K$ and let $T_m = S^1 \times m$ be an embedded torus in $S^1 \times M_K$. Then a knot surgery $4$-manifold $X_K$ is defined by
\[
X_K = X \sharp_{T=T_m} S^1 \times M_K = [X\setminus N(T)] \cup [S^1 \times (S^3\setminus N(K)),]
\]
where $N(T) \cong D^2 \times T$ is a tubular neighborhood of $T$ in $X$ and $N(K)$ is a tubular neighborhood of $K$ in $S^3$. Here the fiber sum operation identifies a longitude circle of $K$ and a normal circle to $T$ in $X$.

\smallskip

\begin{thm}[\cite{FS:98}]
\label{thm:FS-knot}
Suppose that $X$ is a smooth $4$-manifold which contains a $c$-embedded torus $T$ and $\pi_1(X) = 1 = \pi_1(X\setminus T)$ . Then $X_K$ is homeomorphic to $X$ and
\begin{enumerate}
 \item if  $b^+(X) > 1$ , then \[ \mathcal{SW}_{X_K} = \mathcal{SW}_X\cdot \Delta_K(t)\]
 \item if $b^+(X) = 1$, then the $[T]^\perp$-restricted Seiberg-Witten invariants of $X_K$ are
\[\mathcal{SW}_{X_K, T}^{\pm} = \mathcal{SW}_{X, T}^{\pm} \cdot \Delta_K(t)\]
\end{enumerate}
where $t=\exp(2[T])$ and $\Delta_K$ is the symmetrized Alexander polynomial of $K$.
\end{thm}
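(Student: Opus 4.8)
The plan is to treat the two assertions — the homeomorphism type and the Seiberg–Witten formula — by entirely different means, the first topological and the second gauge-theoretic. For the homeomorphism statement I would run van Kampen and Mayer–Vietoris across the decomposition $X_K = [X\setminus N(T)]\cup[S^1\times(S^3\setminus N(K))]$ glued along $T^3$. The inserted piece $S^1\times(S^3\setminus N(K))$ has the integral homology of $T^2$, because a knot complement is a homology $S^1$ with the same homology as the unknot complement; this matches exactly the homology of the removed $N(T)\cong D^2\times T$. Since the fiber sum identifies the longitude of $K$ with the normal circle of $T$, the class $[T]$ and the whole intersection form of $X_K$ agree with those of $X$. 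The hypotheses $\pi_1(X)=1=\pi_1(X\setminus T)$, together with the fact that the meridian $m$ of $K$ — the normal generator of $\pi_1(S^3\setminus N(K))$ — is killed in the glued manifold, force $\pi_1(X_K)=1$ by van Kampen. As $X_K$ is then a closed simply connected $4$-manifold with the same intersection form as $X$, Freedman's classification yields $X_K\cong_{\mathrm{homeo}} X$ in both cases (1) and (2).

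For the Seiberg–Witten formula I would argue by a skein/induction scheme anchored at the unknot $U$. When $K=U$ we have $S^3\setminus N(U)=S^1\times D^2$, so $S^1\times(S^3\setminus N(U))=T^2\times D^2\cong N(T)$, whence $X_U=X$; since $\Delta_U(t)=1$ the asserted identity holds tautologically. For a general knot I would use the crossing-change skein triple $(K_+,K_-,K_0)$, for which the symmetrized Alexander polynomials satisfy $\Delta_{K_+}(t)-\Delta_{K_-}(t)=(t^{1/2}-t^{-1/2})\,\Delta_{K_0}(t)$. The crossing change and the oriented smoothing are realized by surgeries on an unknotted circle lying in $S^3\setminus N(K)$; crossed with $S^1$ and fiber-summed into $X$ away from $T$, these produce precisely $X_{K_+}$, $X_{K_-}$, $X_{K_0}$, now related by surgeries supported in a ball-like region disjoint from the gluing torus. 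The analytic engine is the gluing (product) formula for Seiberg–Witten invariants across the separating $T^3$: it writes $\mathcal{SW}_{X_K}$ as $\mathcal{SW}_X$ times the contribution of the internal piece $S^1\times M_K$, and by the Meng–Taubes identification of the three-dimensional Seiberg–Witten invariant with Milnor–Turaev torsion, that internal contribution is governed by $\Delta_K(t)$ with $t=\exp(2[T])$. Running the surgery relation through the product formula reproduces the Alexander skein relation at the level of $\mathcal{SW}$, and since every knot is unknotted by finitely many crossing changes, induction from the base case delivers $\mathcal{SW}_{X_K}=\mathcal{SW}_X\cdot\Delta_K(t)$ in case (1).

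The main obstacle is exactly this gauge-theoretic step: controlling the contribution of the $S^1\times M_K$ summand in terms of torsion and verifying that the surgery formula matches the skein coefficient $(t^{1/2}-t^{-1/2})$ under the normalization $t=\exp(2[T])$; the $c$-embedded (cusp) hypothesis on $T$ is what guarantees the neighborhood and its vanishing cycles behave well enough for the gluing formula to apply cleanly. For case (2), where $b^+(X)=1$, the same inductive scheme carries through but one must replace the absolute invariant by the $[T]^\perp$-restricted invariants $\mathcal{SW}^{\pm}_{X,T}$ so that wall-crossing does not corrupt the count: since $T$ has self-intersection $0$, the relevant chambers are determined by $[T]^\perp$, and the product formula then yields $\mathcal{SW}^{\pm}_{X_K,T}=\mathcal{SW}^{\pm}_{X,T}\cdot\Delta_K(t)$ as stated.
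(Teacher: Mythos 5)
This theorem is not proved in the paper at all: it is quoted as background and attributed to Fintushel--Stern \cite{FS:98}, so there is no in-paper argument to measure your proposal against. Judged against the original proof in \cite{FS:98}, your outline follows essentially the same plan. The homeomorphism half is correct and standard: the knot complement is a homology circle, so $S^1\times(S^3\setminus N(K))$ has the homology of $N(T)$ and Mayer--Vietoris preserves the intersection form; van Kampen kills $\pi_1$ because the meridian normally generates the knot group and all boundary classes die in $\pi_1(X\setminus N(T))=1$; Freedman then applies (with the tacit point that both manifolds are smooth, hence have vanishing Kirby--Siebenmann invariant). The Seiberg--Witten half, a skein induction anchored at the unknot with the $c$-embedding hypothesis securing the gluing formula, is indeed the strategy of \cite{FS:98}.

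Two points in the gauge-theoretic half need repair, however. First, you conflate two logically independent arguments: the Fintushel--Stern route (skein induction driven by the Morgan--Mrowka--Szab\'o surgery formula for a square-zero torus) and the direct route (product formula along $T^3$ plus the Meng--Taubes identification of the internal contribution with the Alexander polynomial). If you may quote Meng--Taubes together with a product formula, the entire skein induction is redundant; if you may not, you must derive the skein relation for $\mathcal{SW}$ from the torus surgery formula alone, which is the actual work in \cite{FS:98}. Second, and more seriously, the crossing-change surgeries are \emph{not} ``supported in a ball-like region.'' A crossing change is $\mp 1$ surgery on an unknotted circle $C$ encircling two strands of $K$, and after crossing with $S^1$ it becomes a logarithmic transform along the essential square-zero torus $S^1\times C\subset X_K$; if these operations took place inside a ball, the Seiberg--Witten invariants could not change at all and the theorem would be vacuous. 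The whole content of the inductive step is that $S^1\times C$ is essential, which is precisely what the surgery formula is for. Relatedly, the oriented smoothing $K_0$ of a knot is a two-component link, so the third vertex of your skein triangle is not a knot-surgery manifold of the same shape as $X_{K_\pm}$; handling it requires the link-surgery formalism, and that is where the coefficient $(t^{1/2}-t^{-1/2})$ must actually be produced. As written, your inductive step asserts rather than establishes this matching.
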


\begin{rem}
If $K$ is a fibered knot in $S^3$ and $X$ is a symplectic $4$-manifold, then $X_K$ also admits a symplectic structure. Moreover,
\(
M_K = S^1 \times_\varphi \Sigma
\) for some closed surface $\Sigma$ and a diffeomorphism $\varphi: \Sigma \to \Sigma$ and $\Delta_K(t) = \text{det}(\varphi_* - t  \text{I})$.
\end{rem}

\begin{thm}[\cite{IP:99}, \cite{FS:99}]
\label{thm:IP}
 Suppose that $X$ is a symplectic $4$-manifold with $b^+(X) > 1$ and $T$ is a symplectically embedded torus with self-intersection $0$ in a cusp neighborhood in $X$. Let $\Sigma$ be also a symplectically embedded surface with a symplectomorphism $\varphi: \Sigma \to \Sigma$ which has a fixed point$\varphi(x_0) = x_0$. Let $m_0 = S^1 \times_\varphi \{x_0\}$ and $T_0 = S^1 \times m_0 \subset S^1 \times (S^1 \times_\varphi \Sigma)$. Then $X_\varphi = X \#_{T=T_0} S^1\times (S^1 \times_\varphi \Sigma)$ is also a symplectic $4$-manifold whose Seiberg-Witten invariant is given by
\[\mathcal{SW}_{X_\varphi} = \mathcal{SW}_X\cdot \Delta(t),\]
where $t= \exp(2[T])$ and $\Delta(t)$ is the symmetrization of $\text{det} (\varphi_* -t \text{I})$.
\end{thm}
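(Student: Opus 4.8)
The plan is to treat the two assertions separately: first that $X_\varphi$ carries a symplectic structure, and then that its Seiberg--Witten invariant is the stated product.

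For the symplectic structure, I would observe that the mapping torus $S^1 \times_\varphi \Sigma$ fibers over $S^1$ with fiber $\Sigma$, so that $W := S^1 \times (S^1 \times_\varphi \Sigma)$ is a $\Sigma$-bundle over the torus $T^2 = S^1 \times S^1$. Since $\varphi$ is a symplectomorphism and the fiber class is nontorsion, Thurston's construction produces a symplectic form on $W$ restricting to the area form on each fiber and to a symplectic form on the base $T^2$. The fixed point $x_0$ determines a section $m_0$ of $S^1 \times_\varphi \Sigma \to S^1$, and hence $T_0 = S^1 \times m_0$ is a torus section of $W \to T^2$. Its normal bundle is pulled back from the normal bundle of $m_0$ in $S^1 \times_\varphi \Sigma$, an oriented plane bundle over the circle $m_0$ and therefore trivial; so $[T_0]^2 = 0$, and by choosing Thurston's form compatibly $T_0$ is symplectically embedded. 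With $T \subset X$ and $T_0 \subset W$ both symplectic tori of square zero, Gompf's symplectic fiber sum theorem shows that $X_\varphi = X \#_{T=T_0} W$ is symplectic.

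For the Seiberg--Witten invariant, I would run the same gluing argument that underlies the knot surgery formula, Theorem~\ref{thm:FS-knot}. Writing $X_\varphi = [X \setminus N(T)] \cup_{T^3} [W \setminus N(T_0)]$, the decomposition along $T^3$ yields a multiplicative formula $\mathcal{SW}_{X_\varphi} = \mathcal{SW}_X \cdot \mathcal{SW}^{\mathrm{rel}}$, where $\mathcal{SW}^{\mathrm{rel}}$ is the relative Seiberg--Witten invariant of the fibered piece $W \setminus N(T_0) = S^1 \times \big((S^1 \times_\varphi \Sigma) \setminus N(m_0)\big)$. The task then reduces to identifying this relative invariant with $\Delta(t)$.

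The main obstacle is precisely this identification. I would appeal to the Meng--Taubes theorem, which equates the Seiberg--Witten invariant of $S^1 \times N^3$ with the Milnor--Turaev torsion (equivalently the Alexander polynomial) of the $3$-manifold $N$. For the fibered $3$-manifold $N = S^1 \times_\varphi \Sigma$, the Alexander polynomial associated to the fibration class is, up to units, the characteristic polynomial $\det(\varphi_* - t\,\mathrm{I})$ of the monodromy acting on $H_1(\Sigma)$; symmetrizing gives exactly $\Delta(t)$. This is the fibered-knot identity $\Delta_K(t) = \det(\varphi_* - t\,\mathrm{I})$ recorded in the Remark, now applied to a general symplectic surface $\Sigma$ rather than to a Seifert surface of a knot in $S^3$. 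Combining this with the gluing formula of the previous step yields $\mathcal{SW}_{X_\varphi} = \mathcal{SW}_X \cdot \Delta(t)$. Alternatively, following Ionel--Parker, one could compute the Gromov invariant of $X_\varphi$ directly via the symplectic sum formula for Gromov--Witten invariants and then invoke Taubes' equivalence $\mathrm{SW} = \mathrm{Gr}$; the count of holomorphic fiber-class curves in the sum again reproduces $\det(\varphi_* - t\,\mathrm{I})$.
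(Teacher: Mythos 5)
The first thing to note is that the paper does not prove Theorem~\ref{thm:IP} at all: it is quoted as a known result, with the citations \cite{IP:99} and \cite{FS:99} standing in for a proof. Measured against the argument those citations point to, your sketch is half right. The symplectic-structure half (Thurston form on the $\Sigma$-bundle $W \to T^2$, the fixed point $x_0$ giving a square-zero symplectic section torus $T_0$, then Gompf's symplectic sum along $T = T_0$) is correct and is how the sources proceed. And your closing ``alternative'' --- Ionel--Parker's symplectic sum formula for the Gromov series of $X \#_{T=T_0}(S^1 \times (S^1\times_\varphi \Sigma))$, which produces the factor $\det(\varphi_* - t\,\mathrm{I})$, followed by Taubes' $\mathrm{SW}=\mathrm{Gr}$ (this is where $b^+>1$ and the symplectic hypotheses are consumed) --- is not an alternative at all: it is precisely the proof the citation \cite{IP:99}, \cite{FS:99} refers to.

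Your primary route for the invariant, however, has a genuine gap at the Meng--Taubes step. Meng--Taubes computes the Seiberg--Witten invariant of the \emph{closed} manifold $S^1 \times N$ for a closed $3$-manifold $N$; it says nothing directly about the relative invariant of $W \setminus N(T_0) = S^1 \times \bigl((S^1\times_\varphi\Sigma)\setminus N(m_0)\bigr)$ rel its $T^3$ boundary, which is what your product formula $\mathcal{SW}_{X_\varphi} = \mathcal{SW}_X \cdot \mathcal{SW}^{\mathrm{rel}}$ consumes. Identifying that relative invariant with $\Delta(t)$ is essentially the content of the knot surgery theorem (Theorem~\ref{thm:FS-knot}) rather than an input to it; in \cite{FS:98} it requires the Morgan--Mrowka--Szab\'o/Taubes gluing machinery and is not a formal consequence of the closed-manifold computation. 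Relatedly, your sketch never uses the hypothesis that $T$ lies in a cusp neighborhood. That hypothesis (c-embeddedness of $T$) is exactly what rules out corrections from rim tori and makes the product formula hold in the stated single-variable form $t=\exp(2[T])$, so an argument that never invokes it cannot be complete. Deleting the primary route and promoting your final sentence to the actual proof recovers the intended argument.
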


Furthermore, one can extend a knot surgery technique to link surgery as follows: If  $L = L_1 \cup L_2 \cup \cdots \cup L_p$ is an oriented $p$-component link in $S^3$ and $(X_i, T_i)$ is a pair of simply connected smooth $4$-manifold $X_i$ and a smoothly embedded torus $T_i$ with self-intersection $0$ in $X_i$ ($i=1,2, \cdots p$), then one can define a link surgery $4$-manifold by
\[
X(X_1, X_2, \cdots, X_p; L) = [S^1 \times (S^3 \setminus N(L))] \cup  \bigcup_{i=1}^p [X_i \setminus N(T_i)],
\]
where $S^1 \times \partial N(L_i)$ is identified with $\partial N(T_i)$ so that, for each $i=1,2, \cdots p$,
\[
[T_m]= [T_i] \text{ \ \ and \ \ } [\gamma_i] = [\text{pt} \times \partial D^2].
\]
Here $\gamma_i = \ell_i + \alpha_L(\ell_i)m_i$ with a meridian $m_i$ and a longitude $\ell_i$ of the component $L_i$ and $\alpha_L : \pi_1(S^3 \setminus L) \to \mathbb{Z}$ is a homomorphism defined by  $\alpha_L(m_i) =1$.

\smallskip

\begin{thm}[\cite{FS:98}]
Suppose that $X_i$ is a simply connected smooth $4$-manifold with a homologically essential torus $T_i$ in a cusp neighborhood and $\pi_1(X_i \setminus T_i) =1$.
Then $X(X_1, X_2, \cdots, X_p; L) $is a simply connected smooth $4$-manifold and its Seiberg-Witten invariant is given by
\[
\mathcal{SW}_{X(X_1, X_2, \cdots, X_p; L) } = \Delta_L (t_1, t_2, \cdots, t_p) \prod_{i=1}^p \mathcal{SW}_{E(1) \sharp_{F=T_i} X_i}
\]
where $t_i = \exp(2[T_i])$ and $\Delta_L(t_1, t_2, \cdots t_p)$ is the symmetrized multivariable Alexander polynomial of $L$, and $F$ is a generic fiber of $E(1)$.
\end{thm}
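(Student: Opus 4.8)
The statement contains two assertions---simple connectivity and the Seiberg-Witten product formula---which I would establish in turn. \emph{Simple connectivity.} The plan is a van Kampen computation using the given decomposition
\[
X(X_1,\ldots,X_p;L)=\big[S^1\times(S^3\setminus N(L))\big]\cup\bigcup_{i=1}^{p}\big[X_i\setminus N(T_i)\big],
\]
glued along the $p$ three-tori $S^1\times\partial N(L_i)\cong\partial N(T_i)$. Since $T_i$ sits in a cusp neighborhood and $\pi_1(X_i\setminus T_i)=1$, a retraction argument gives $\pi_1(X_i\setminus N(T_i))=1$, so gluing in each $X_i\setminus N(T_i)$ kills the entire image of $\pi_1$ of the corresponding boundary $T^3$. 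Under the identifications $[T_m]=[T_i]$ and $[\gamma_i]=[\mathrm{pt}\times\partial D^2]$ this in particular kills the $S^1$ factor and each meridian $m_i$. Because $\pi_1(S^3\setminus N(L))$ is normally generated by the meridians $m_1,\ldots,m_p$ and the base $S^1$ is also killed, van Kampen forces $\pi_1\big(X(X_1,\ldots,X_p;L)\big)=1$.

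\emph{The product formula.} The main tool is the gluing theorem for Seiberg-Witten invariants along the $T^3$-necks of the decomposition above. I would first identify the relative invariant of the central piece $S^1\times(S^3\setminus N(L))$. By the Meng-Taubes theorem, the Seiberg-Witten invariant of $S^1\times Y$ for a three-manifold $Y$ with $b_1(Y)>0$ equals Turaev's maximal abelian torsion of $Y$; applied to the link exterior $Y=S^3\setminus N(L)$ and combined with Turaev's identification of that torsion with the multivariable Alexander function, this produces exactly $\Delta_L(t_1,\ldots,t_p)$, where each formal variable $t_i$ records the meridian class $m_i$. The gluing prescription $[T_m]=[T_i]$ then translates $t_i$ into $\exp(2[T_i])$, matching the statement. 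Each of the $p$ ends contributes the relative invariant of $X_i\setminus N(T_i)$; the point is that capping the $i$-th $T^3$ with the complement of a cusp-fiber neighborhood of the rational elliptic surface $E(1)$ reconstructs the closed manifold $E(1)\sharp_{F=T_i}X_i$, so this relative invariant is precisely $\mathcal{SW}_{E(1)\sharp_{F=T_i}X_i}$. Multiplying the central Alexander factor by the $p$ building-block invariants then yields the asserted product.

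As a consistency check one can specialize to $p=1$: here $X(X_1;K)$ is literally the knot surgery manifold $(X_1)_K$, and the formula reads $\mathcal{SW}_{(X_1)_K}=\Delta_K(t_1)\cdot\mathcal{SW}_{E(1)\sharp_{F=T_1}X_1}$, which agrees with Theorem~\ref{thm:FS-knot} once one knows that fiber summing with $E(1)$ does not change the relevant invariant, i.e. $\mathcal{SW}_{E(1)\sharp_{F=T_1}X_1}=\mathcal{SW}_{X_1}$.

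\emph{Main obstacle.} The hard part is the $T^3$-gluing formula and its bookkeeping, not the $\pi_1$ count. Gluing along $T^3$ requires controlling reducible solutions and the chamber/wall-crossing structure coming from the $b^+$ contributions of the ends, and the identification of the central relative invariant with $\Delta_L$ needs the full force of Meng-Taubes and Turaev together with a careful match of the variables, orientations, and the symmetrization. The most delicate point is extracting the precise factor $\mathcal{SW}_{E(1)\sharp_{F=T_i}X_i}$ from the relative invariant of each end: verifying that $E(1)$ is exactly the normalizing cap is what distinguishes this formula from a naive product, and it is the step I expect to demand the most care.
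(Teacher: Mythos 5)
First, a point of order: the paper does not prove this statement at all --- it is quoted from Fintushel and Stern \cite{FS:98} as background, exactly like Theorems~\ref{thm:FS-knot} and~\ref{Theorem:Covering-Method}, so there is no in-paper argument to compare yours against; the comparison below is with the cited source. Your outline has the right overall shape. The van Kampen computation is correct: each boundary $3$-torus carries $[S^1]$, $[m_i]$, $[\ell_i]$, all of which die in the simply connected piece $X_i\setminus N(T_i)$, and the meridians normally generate $\pi_1(S^3\setminus N(L))$. The Seiberg--Witten formula does arise from gluing along the $3$-tori, with the central piece $S^1\times (S^3\setminus N(L))$ contributing Milnor torsion (Meng--Taubes, identified by Turaev with the multivariable Alexander function) and each end normalized by the $E(1)$ cap; this is the standard route, although Fintushel and Stern's own treatment of the knot case in \cite{FS:98} is organized instead around a crossing-change (skein) induction via the Morgan--Mrowka--Szab\'o surgery formula.

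However, your consistency check contains a genuine error, and it sits exactly at the bookkeeping you yourself flagged as the delicate point. The claim $\mathcal{SW}_{E(1)\sharp_{F=T_1}X_1}=\mathcal{SW}_{X_1}$ is false: fiber summing with $E(1)$ multiplies the invariant by $t^{1/2}-t^{-1/2}$. The paper itself records an instance of this in Theorem~\ref{Theorem:Covering-Method}(b), where $\mathcal{SW}_{E(1)\sharp_{F}\mathrm{K3}}=t^{1/2}-t^{-1/2}$ although $\mathcal{SW}_{\mathrm{K3}}=1$ (concretely, $E(1)\sharp_F E(2)=E(3)$). Consequently, read literally at $p=1$, the displayed product formula would give $\Delta_K(t)\,(t^{1/2}-t^{-1/2})\,\mathcal{SW}_{X_1}$, which does not reduce to Theorem~\ref{thm:FS-knot}. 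The resolution --- and the point your sketch misses --- is that the Milnor torsion of a link exterior equals $\Delta_L(t_1,\ldots,t_p)$ only when $p\ge 2$; for a knot the torsion is $\Delta_K(t)/(t-1)$, and that denominator is precisely what cancels the factor $t^{1/2}-t^{-1/2}$ coming from the $E(1)$ cap. In other words, the formula as displayed is the $p\ge 2$ statement (which is how this paper uses it, with $p$ odd and at least $3$), and a proof routed through Meng--Taubes must keep torsion and Alexander polynomial distinct rather than conflating them as your check does: the discrepancy is by factors of $t_i^{1/2}-t_i^{-1/2}$, which are not units in the Laurent polynomial ring, so it is not a harmless normalization slip.
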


\smallskip

\subsection{A $2$-bridge knot}

Assume that $p$ and $q$ are relatively prime integers with $p$ odd.
Let us consider a $2$-bridge knot $b(p,q)$ which is defined as follows:

\begin{defn}[\cite{BZ:03}]
\label{Definition:2bridge}
A \emph{$2$-bridge knot} $b(p,q)$ is of the form
\[ C(n_1, -n_2, n_3, -n_4, \cdots, (-1)^{k-1} n_k)\]
as in Figure~\ref{fig:2bridge}, where
\[
\frac{q}{p} = \frac{1}{n_1+\frac{1}{n_2+\frac{1}{\ddots
\frac{1}{n_{k-1} + \frac{1}{n_k}}}}} =[n_1, n_2, \cdots, n_k].
\]

\begin{figure}[htb]
 \begin{center}
\includegraphics[scale=.9]{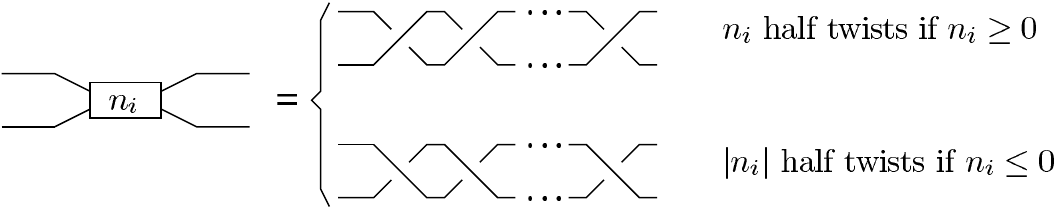}
 \includegraphics[scale=.9]{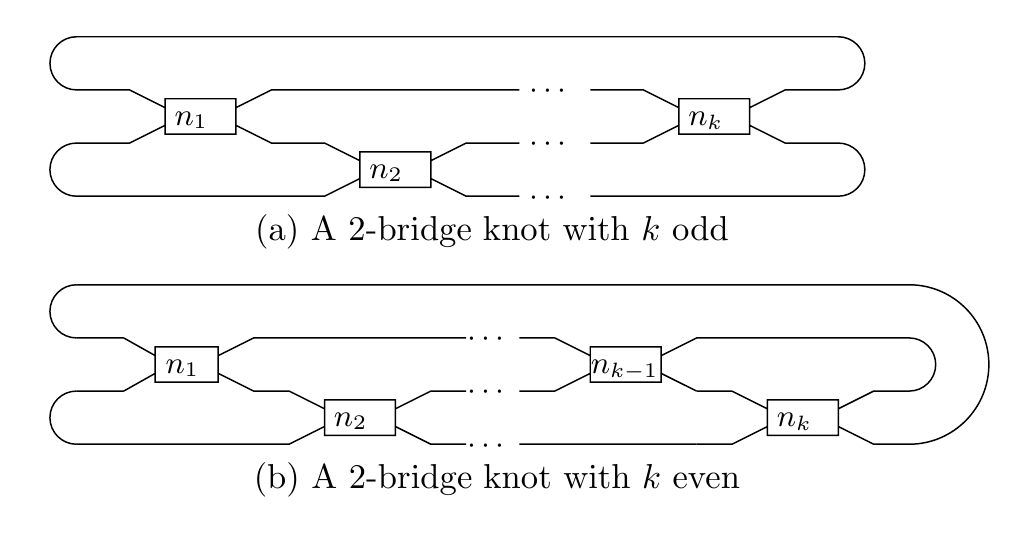}
\end{center}
\caption{A $2$-bridge knot  $C(n_1, n_2, \cdots, n_k)$}\label{fig:2bridge}
\end{figure}

It is a $4$-plat whose defining braid is
\[
\sigma_2^{n_1} \sigma_1^{-n_2} \sigma_2^{n_3} \sigma_1^{-n_4} \cdots  \sigma_1^{-n_k}  \ \ \  {\textrm{if $k$ is\, even}}, \]
\[\sigma_2^{n_1} \sigma_1^{-n_2} \sigma_2^{n_3} \sigma_1^{-n_4} \cdots  \sigma_2^{n_k}  \ \ \ {\textrm{if $k$ is\, odd}}.
\]
 Here $\sigma_i$ is a standard braid generator as in Figure 10.3 of~\cite{BZ:03}. We now denote
\[ D(n_1, n_2, \cdots, n_k) = C(2n_1, 2n_2, \cdots, 2n_k). \]
\end{defn}

\smallskip

\begin{lem}[\cite{BZ:03}, \cite{Kanenobu:89}]
\label{Lemma:2-bridge}
\begin{itemize}
\item[(a)] Two $2$-bridge knots $b(p,q)$ and $b(p',q')$ are equivalent if and only if $p=p'$ and $q=q'$ $($or $qq' \equiv 1 )$ $(\! \! \! \mod p)$.
\item[(b)] $D(n_1, n_2, \cdots, n_{2k})$ is a fibered knot if and only if each $n_i$ is $1$ or $-1$.
\item[(c)] Two $2$-bridge knots $D(a_1, a_2, \cdots, a_{2k})$ and $D(b_1, b_2, \cdots, b_{2l})$ are ambient isotopic  if and only if $k=l$ and $a_i =b_i$ or $a_i = b_{2k+1-i}$ for all $1\le i \le 2k$.
\end{itemize}
\end{lem}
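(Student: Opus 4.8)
The three parts are classical and independently documented in \cite{BZ:03} and \cite{Kanenobu:89}, so the plan is to assemble the statement from the theory of double branched covers, the uniqueness of continued-fraction expansions, and the fiberedness criterion for alternating knots. For part (a), I would pass to the double branched cover of $S^3$. The essential input is that the two-fold branched cover of $(S^3, b(p,q))$ is the lens space $L(p,q)$, and that a $2$-bridge knot is recovered from this branched cover, so two $2$-bridge knots are equivalent exactly when the associated lens spaces are homeomorphic by an orientation-preserving map. Schubert's classification then reduces to the homeomorphism classification of lens spaces, namely that $L(p,q)\cong L(p',q')$ (orientation-preservingly) if and only if $p=p'$ and $q'\equiv q^{\pm1}\pmod p$, which is proved via Reidemeister torsion. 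Writing $q'\equiv q^{\pm1}$ out as the two cases $q'=q$ and $qq'\equiv 1 \pmod p$ gives the stated criterion; the only delicate point is bookkeeping the orientation conventions so that the mirror image, corresponding to $q\mapsto -q$, is correctly excluded.

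For part (c), I would reduce everything to (a) using the fact that every fraction $q/p$ with $p$ odd has a \emph{unique} continued-fraction expansion $[2a_1,2a_2,\dots,2a_{2k}]$ of even length with all entries even. Hence the symbol $D(a_1,\dots,a_{2k})$ determines the pair $(p, q \bmod p)$, and in particular determines both $p$ and the length $2k$. By part (a), two such knots $D(a_1,\dots,a_{2k})$ and $D(b_1,\dots,b_{2l})$ are isotopic precisely when $p=p'$ (forcing $k=l$) and $q'\equiv q^{\pm1}\pmod p$. The alternative $q'=q$ forces the two even expansions to coincide, giving $a_i=b_i$ for all $i$. The alternative $qq'\equiv1\pmod p$ is matched to the \emph{reversal} of the continued fraction: using the transpose identity for the products of the matrices $\bigl(\begin{smallmatrix} c & 1\\ 1 & 0\end{smallmatrix}\bigr)$ representing the continued fraction, together with the alternating sign conventions of Definition~\ref{Definition:2bridge}, the reversed expansion $[2a_{2k},\dots,2a_1]$ realizes the inverse class mod $p$, which yields $a_i=b_{2k+1-i}$.

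For part (b), I would invoke the fiberedness criterion for alternating knots: since every $2$-bridge knot is alternating, it is fibered if and only if its Alexander polynomial is monic, i.e.\ has leading coefficient $\pm1$. The even plat presentation of $D(n_1,\dots,n_{2k})$ produces a Seifert surface of genus $k$, and from the corresponding Seifert matrix one reads off the top coefficient of $\Delta$ as (up to sign) the product $\prod_{i=1}^{2k}|n_i|$. This product equals $1$ exactly when every $|n_i|=1$, which establishes both directions. For the forward direction one may instead appeal directly to the remark preceding the lemma: when each $n_i=\pm1$ the monodromy of the genus-$k$ fibration is explicit, so $D(n_1,\dots,n_{2k})$ is visibly fibered.

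The genuinely deep ingredient here is the lens-space homeomorphism classification underlying (a); once that is imported, parts (b) and (c) are combinatorial translations. I expect the main obstacle in writing a self-contained argument to be precisely the sign and orientation bookkeeping in (c)—ensuring that the reversal of the all-even expansion corresponds to $qq'\equiv 1\pmod p$ rather than to a mirror class—since the $C$-to-$b$ dictionary of Definition~\ref{Definition:2bridge} inserts alternating signs that must be tracked carefully.
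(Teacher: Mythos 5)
The paper does not prove Lemma~\ref{Lemma:2-bridge} at all: it is imported background, cited to \cite{BZ:03} and \cite{Kanenobu:89} in place of an argument. So there is no internal proof to compare yours against; what you have written is a reconstruction of the proofs in those references, and in outline it is the standard and correct one. Two structural remarks. For (a), it is worth separating which direction needs what: ``equivalent $\Rightarrow$ arithmetic condition'' needs only the homeomorphism classification of the lens-space double covers, whereas ``arithmetic condition $\Rightarrow$ equivalent'' is elementary (symmetries of Schubert's plat normal form realize $q\mapsto q^{-1}\bmod p$). Your formulation routes this second direction through the branched cover as well, so it silently invokes the statement that a $2$-bridge knot is \emph{recovered} from its double branched cover, i.e.\ uniqueness up to conjugacy of the covering involution on a lens space (Hodgson--Rubinstein) --- a much deeper theorem than the lemma requires. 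For (c), the sign point you flag as delicate does work out: the reversal of the all-even expansion $[e_1,\dots,e_{2k}]$ of $q/p$ represents $q'/p$ with $qq'\equiv(-1)^{2k+1}=-1\pmod p$, and since the dictionary of Definition~\ref{Definition:2bridge} is $e_i=(-1)^{i-1}2a_i$, reversing the $a$-tuple amounts to reversing \emph{and negating} the $e$-sequence; the negation is a mirror, $q'\mapsto -q'$, so $D(a_{2k},\dots,a_1)=b(p,q'')$ with $qq''\equiv +1\pmod p$, exactly as needed.

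Three concrete corrections. First, an all-even continued-fraction expansion of $q/p$ exists only when $q$ is even; for $p$ odd and $q$ odd one must first replace $q$ by $q\pm p$, so your claim ``every fraction $q/p$ with $p$ odd has a unique all-even expansion of even length'' needs this normalization built in. Second, the parenthetical ``$p=p'$ (forcing $k=l$)'' is misattributed: a fixed odd $p$ carries even expansions of different lengths for different residues (e.g.\ for $p=15$, $4/15=[4,-4]$ has length $2$ while $14/15$, the fraction of $D(1,\dots,1)$ with fourteen entries, has length $14$), so $k=l$ follows only from your subsequent case analysis ($q'=q$ or $qq'\equiv 1$) combined with uniqueness of the even expansion --- which your argument does supply, but not from $p=p'$ alone. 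Third, your fallback for the forward direction of (b) fails: the remark in the paper preceding the lemma \emph{assumes} $K$ is fibered and then describes $M_K$ and $\Delta_K$; it is not a fiberedness criterion. If you want that direction independently of Murasugi's ``alternating and monic $\Leftrightarrow$ fibered'' theorem, argue instead that $D(\pm1,\dots,\pm1)$ bounds an iterated plumbing of Hopf bands and is therefore fibered by Stallings. None of these affects the overall viability of your plan.
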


\smallskip

\subsection{A Fintushel and Stern's covering method}

 We begin by mentioning that a $2$-bridge knot $b(p,q)$ is characterized by its branched double covering which is the lens space $L(p,q)$. Note that the lens space $L(p,q)$ has a universal cyclic $p$-fold covering, say, $\theta_p :S^3 \to L(p,q)$. If we denote a $2$-fold covering map branched over $b(p,q)$ by $\pi_2: L(p,q) \to S^3$, then we have the following diagram

\[
\xymatrix{
 & ( S^3, \widehat{b(p,q)})\ar[dl]_{f_2}
 \ar[dr]^{\theta_p} &    \\
 (S^3, \widetilde{b(p,q )}) \ar[dr]_{g_p}&  &
 (L(p,q), \overline{b(p,q)})\ar[dl]^{\pi_2} \\
  & (S^3, b(p,q)) &
}
\]

Here we denote $\overline{b(p,q)} = \pi_2^{-1}(b(p,q))$ and $\widehat{b(p,q)} = (\theta_p \circ \pi_2)^{-1}(b(p,q))$, called a dihedral covering link,  which is a $p$-component link in $S^3$. $g_p$ is a branched irregular $p$-fold covering and $f_2$ is a  $2$-fold covering which is branched over an unknot \cite{Cappell-Shaneson:1982}.

If $b(p,q)$ is a fibered $2$-bridge knot, we know that a $3$-manifold  $M_{b(p,q)}$, obtained by $0$-framed surgery along $b(p,q)$ in $S^3$,
is the same as  $S^1 \times_\varphi \Sigma$ for some closed surface $\Sigma$
and a homeomorphism $\varphi:\Sigma \to \Sigma$, and
\[
\overline{M}_{b(p,q)} = S^1 \times_{\varphi^2} \Sigma \to M_{b(p,q)} = S^1 \times_\varphi \Sigma
\]
is a $2$-fold covering map and there is a torus $T_{\overline{m}}= S^1 \times \pi_2^{-1}(m)$ in $S^1 \times \overline{M}_{b(p,q)}$.
Here $m$ is a meridian of $b(p,q)$.

Suppose that $X$ is a K3 surface and $F$ is a generic fiber of an elliptic fibration on $X$. Let us define a symplectic $4$-manifold by
\[
X_{b(p,q)} = X \sharp_{F = T_{\overline{m}}} S^1 \times \overline{M}_{b(p,q)},
\]
where the gluing map of the fiber sum is chosen so that the boundary of a normal disk to $F$ is matched with the lift $\overline{l}$ of a longitude to $K(p,q)$.
And let us consider its universal $p$-fold covering
\[
\widehat{\theta}_p : \widehat{X}_{b(p,q)} = X(X, \cdots, X; \widehat{b(p,q)}) \to X_{b(p,q)}
\]
which is corresponding to the $p$-fold covering
$\theta_p : (S^3, \widehat{b(p,q)}) \to (L(p,q), \overline{b(p,q)})$.

\begin{thm}[\cite{FS:99}]
\label{Theorem:Covering-Method}
Suppose that $X$ is a K3 surface and $F$ is a generic fiber of an elliptic fibration on $X$. Then
\begin{itemize}
\item[(a)] $X_{b(p,q)}$ is homeomorphic to a rational homology K3 surface with the fundamental group $\pi_1(X_{b(p,q)}) = \mathbb{Z}_p$ and
\[
\mathcal{SW}_{X_{b(p,q)}} = \text{det} (\phi_*^2 - \tau^2 \text{I} ) = \Delta(\tau)\Delta(-\tau)
\]
with $\tau = \exp([F])$.

\item[(b)] $\widehat{X}_{b(p,q)}$ is a simply connected symplectic $4$-manifold and
\begin{eqnarray*}
\mathcal{SW}_{\widehat{X}_{b(p,q)}} &=& \Delta_{\widehat{b(p,q)}} (t_1, t_2, \cdots, t_p) \prod_{i=1}^p \mathcal{SW}_{E(1)\sharp_{T=F_i} \text{K3}} \\
&=&  \Delta_{\widehat{b(p,q)}} (t_1, t_2, \cdots, t_p) \prod_{i=1}^p (t_i^{1/2} - t_i^{-1/2}),
\end{eqnarray*}
where $t_i = \exp(2[F_i])$ with $F_i=F$ in $X$ and $T$ is a generic fiber of $E(1)$.
\end{itemize}
\end{thm}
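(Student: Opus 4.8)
The plan is to deduce (a) by recognizing $X_{b(p,q)}$ as an instance of the symplectic fiber-sum construction of Theorem~\ref{thm:IP}, and to deduce (b) by recognizing its universal cover as a link-surgery $4$-manifold and applying the Fintushel--Stern link-surgery formula~\cite{FS:98}. For (a), I would first use that $b(p,q)$ is a fibered $2$-bridge knot, so $M_{b(p,q)} = S^1\times_\varphi\Sigma$ and hence $\overline{M}_{b(p,q)} = S^1\times_{\varphi^2}\Sigma$ is itself the mapping torus of $\varphi^2$. Thus $X_{b(p,q)} = X\#_{F=T_{\overline m}} S^1\times(S^1\times_{\varphi^2}\Sigma)$ is exactly the manifold $X_{\varphi^2}$ produced by Theorem~\ref{thm:IP} applied to the symplectomorphism $\varphi^2$, whose required fixed point is inherited from that of $\varphi$; this simultaneously exhibits $X_{b(p,q)}$ as symplectic. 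Since $X$ is a K3 surface, $\mathcal{SW}_X = 1$, so with $\tau = \exp([F])$ and $t = \exp(2[F]) = \tau^2$ the variable of Theorem~\ref{thm:IP}, we obtain
\[
\mathcal{SW}_{X_{b(p,q)}} = \mathcal{SW}_X\cdot \det(\varphi_*^2 - \tau^2\,\text{I}) = \det(\varphi_*^2-\tau^2\,\text{I}).
\]
Because $\tau\,\text{I}$ is central, this factors as $\det(\varphi_*-\tau\,\text{I})\det(\varphi_*+\tau\,\text{I})$, and since $\Delta(\tau)$ is the symmetrization of $\det(\varphi_*-\tau\,\text{I})$ while $\det(\varphi_*+\tau\,\text{I})=\Delta(-\tau)$, this equals $\Delta(\tau)\Delta(-\tau)$, as claimed.

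I would then settle the topology in (a). By Novikov additivity of the signature and additivity of the Euler characteristic under fiber sum along square-zero tori (using $\chi(T^2)=0$ and $\chi(S^1\times\overline{M}_{b(p,q)})=\sigma(S^1\times\overline{M}_{b(p,q)})=0$), one gets $\chi(X_{b(p,q)})=\chi(X)=24$ and $\sigma(X_{b(p,q)})=\sigma(X)=-16$; once $\pi_1$ is finite, $b_1=0$ is forced, so $(b^+,b^-)=(3,19)$ and $X_{b(p,q)}$ has the rational homology of a K3 surface. For $\pi_1$ I would run van Kampen's theorem on the fiber sum: the K3 side contributes nothing because $\pi_1(X\setminus N(F))=1$, so $\pi_1(X_{b(p,q)})$ is the quotient of $\pi_1\bigl(S^1\times(\overline{M}_{b(p,q)}\setminus N(\pi_2^{-1}(m)))\bigr)$ by the peripheral relations imposed by the gluing; I would identify this with $\pi_1(L(p,q))=\mathbb{Z}_p$, using that $\overline{M}_{b(p,q)}$ is built from the double branched cover $L(p,q)$ of $b(p,q)$ and that the chosen longitudinal framing $\overline l$ kills the $S^1$-direction.

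For (b), the geometric input is that the cyclic $p$-fold branched cover $\theta_p\colon(S^3,\widehat{b(p,q)})\to(L(p,q),\overline{b(p,q)})$ lifts to a $p$-fold cover of $4$-manifolds which is precisely the link-surgery manifold $X(X,\dots,X;\widehat{b(p,q)})$: the single square-zero torus of $X_{b(p,q)}$ lifts to the $p$ tori $T_1,\dots,T_p$ (one over each component $L_i$ of $\widehat{b(p,q)}$), with the framing curve $\gamma_i=\ell_i+\alpha_L(\ell_i)m_i$ realized as the lift of the gluing datum defining $X_{b(p,q)}$. Granting this identification, $\widehat X_{b(p,q)}$ is simply connected by the Fintushel--Stern link-surgery theorem~\cite{FS:98}, and it is symplectic because it is a finite cover of the symplectic manifold $X_{b(p,q)}$ (pull back the symplectic form). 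The same theorem then gives
\[
\mathcal{SW}_{\widehat X_{b(p,q)}} = \Delta_{\widehat{b(p,q)}}(t_1,\dots,t_p)\prod_{i=1}^p \mathcal{SW}_{E(1)\#_{F=T_i}X}.
\]
Finally I would evaluate each factor: since $X=\mathrm{K3}=E(2)$, the fiber sum $E(1)\#_F\mathrm{K3}=E(3)$, whose Seiberg--Witten invariant is $\mathcal{SW}_{E(3)}=(t^{1/2}-t^{-1/2})^{3-2}=t_i^{1/2}-t_i^{-1/2}$, yielding the displayed product.

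The main obstacle is exactly the geometric identification invoked at the start of (b): verifying that passing to the universal cover of $X_{b(p,q)}$ — equivalently, lifting the fiber-sum construction through the cyclic cover $\theta_p$ of $L(p,q)$ — reproduces the link-surgery $4$-manifold on the dihedral covering link with the correct framings $[\gamma_i]=[\mathrm{pt}\times\partial D^2]$ and $[T_{\overline m}]=[T_i]$. Tracking the peripheral structure (meridians, longitudes, and the longitudinal framing $\overline l$) through the branched covers $\pi_2$ and $\theta_p$, together with checking the fixed-point and boundary hypotheses needed to apply Theorem~\ref{thm:IP} to $\varphi^2$ and to force $\pi_1=\mathbb{Z}_p$, is where the real work lies; once this bookkeeping is in place, both Seiberg--Witten formulas follow formally from the two surgery theorems.
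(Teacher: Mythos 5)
You should know at the outset that the paper contains no proof of this statement: it is quoted wholesale from Fintushel--Stern \cite{FS:99}, with the surrounding constructions (the double cover $\overline{M}_{b(p,q)}$, the definition $\widehat{X}_{b(p,q)} = X(X,\dots,X;\widehat{b(p,q)})$, and the covering map $\widehat{\theta}_p$) merely set up in the preceding subsection. So there is no in-paper argument to compare against, and your proposal has to be judged as a reconstruction of the original \cite{FS:99} proof. As such it is essentially faithful and correct in outline, and it uses exactly the ingredients the paper quotes: for (a), Theorem~\ref{thm:IP} applied to $\varphi^2$ (with $\mathcal{SW}_{K3}=1$, $t=\tau^2$, and the factorization $\det(\varphi_*^2-\tau^2\,\mathrm{I})=\det(\varphi_*-\tau\,\mathrm{I})\det(\varphi_*+\tau\,\mathrm{I})=\Delta(\tau)\Delta(-\tau)$), plus the $\chi$, $\sigma$, and van Kampen bookkeeping giving $(b^+,b^-)=(3,19)$ and $\pi_1=\mathbb{Z}_p$; for (b), the link-surgery formula of \cite{FS:98} together with $E(1)\sharp_F \mathrm{K3}=E(3)$ and $\mathcal{SW}_{E(3)}=t^{1/2}-t^{-1/2}$. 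All of these steps are sound.

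Two caveats. First, note that in the paper's formulation $\widehat{X}_{b(p,q)}$ is \emph{by definition} the link-surgery manifold, so the Seiberg--Witten formula and simple connectivity in (b) follow formally from the quoted link-surgery theorem; what genuinely requires proof is the assertion that this manifold is a $p$-fold cover of $X_{b(p,q)}$ corresponding to $\theta_p$ (you need this both for your symplecticity-by-pullback argument and for the applications in Section 3), together with the computation $\pi_1(X_{b(p,q)})=\mathbb{Z}_p$. You correctly single this out as the crux, but you defer it entirely, so your proposal reduces the theorem to its hard core rather than discharging it --- that peripheral-structure bookkeeping through $\pi_2$ and $\theta_p$ is the actual mathematical content of \cite{FS:99}. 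Second, a small imprecision in your $\pi_1$ sketch: the $S^1$ direction is not killed by the choice of the longitudinal framing $\overline{l}$; it dies automatically, because $\pi_1(X\setminus N(F))=1$ forces the entire image of $\pi_1$ of the boundary $3$-torus to become trivial under van Kampen. Where the identification really enters is in recognizing $\overline{M}_{b(p,q)}\setminus N(\pi_2^{-1}(m))\cong L(p,q)\setminus N(\overline{b(p,q)})$ (the surgery core being isotopic to the meridian) and checking that imposing the peripheral relations on $\pi_1(L(p,q)\setminus \overline{b(p,q)})$ yields exactly $\mathbb{Z}_p$ and not a proper quotient, e.g.\ by an $H_1$ computation.
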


\medskip

\section{Main Construction}

\subsection{Fibered $2$-bridge knots with the same Alexander polynomial}

In this subsection we explain a method how to construct a family of fibered $2$-bridge knots with the same Alexander polynomial which Kanenobu studied \cite{Kanenobu:89}.

\begin{lem}
\label{Lemma:Same-Alex}
If two  $2$-bridge knots  $D(a_1, a_2, \cdots, a_{2n})$ and $D(b_1, b_2, \cdots, b_{2n})$ are inequivalent but they have the same Alexander polynomial, then the following four $2$-bridge knots are also inequivalent but they all have the same Alexander polynomial.
\begin{eqnarray*}
K(a,1) &=& D(a_1, a_2, \cdots, a_{2n}, 1, -a_{2n}, -a_{2n-1}, \cdots, -a_1, 1, a_1, a_2, \cdots, a_{2n}), \\
K(a,-1) &=& D(a_1, a_2, \cdots, a_{2n}, -1, -a_{2n}, -a_{2n-1}, \cdots, -a_1, -1, a_1, a_2, \cdots, a_{2n}), \\
K(b,1) &=& D(b_1, b_2, \cdots, b_{2n}, 1, -b_{2n}, -b_{2n-1}, \cdots, -b_1, 1, b_1, b_2, \cdots, b_{2n}), \\
K(b,-1) &=& D(b_1, b_2, \cdots, b_{2n}, -1, -b_{2n}, -b_{2n-1}, \cdots, -b_1, -1, b_1, b_2, \cdots, b_{2n}).
\end{eqnarray*}
\end{lem}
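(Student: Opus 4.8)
My plan is to separate the statement into two independent claims---that the four knots carry a common Alexander polynomial, and that no two of them are equivalent---and to handle the second by the classification in Lemma~\ref{Lemma:2-bridge}(c), reserving the real effort for the first, which is in essence Kanenobu's computation~\cite{Kanenobu:89}.

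For the polynomials I would work with the $4$-plat description of Definition~\ref{Definition:2bridge}. Let $w_a=\sigma_2^{2a_1}\sigma_1^{-2a_2}\cdots\sigma_1^{-2a_{2n}}$ be the braid of the block $(a_1,\dots,a_{2n})$ and let $A$ be its reduced Burau matrix over $\mathbb{Z}[t^{\pm1}]$; the symmetrized Alexander polynomial $\Delta_{D(a)}$ is then recovered, up to a unit $\pm t^{k}$ (written $\doteq$), as a fixed entry---equivalently a fixed bilinear form $u^{\top}Av$---of $A$ \cite{BZ:03}. The structural observation that makes the construction work is that, once the block $(-a_{2n},\dots,-a_1)$ is placed in its slot inside the word of $K(a,\epsilon)$, the alternating $\sigma_2/\sigma_1$ convention turns it into exactly the inverse braid $w_a^{-1}$; this is a braid-group identity, checked by matching factors position by position, so its Burau matrix is $A^{-1}$.

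Granting this, the whole word of $K(a,\epsilon)$ carries the Burau matrix
\[
P(\epsilon)=A\,E_1(\epsilon)\,A^{-1}\,E_2(\epsilon)\,A,
\]
with $E_1(\epsilon),E_2(\epsilon)$ the ($a$-independent) matrices of the two $\epsilon$-twist regions. The crux is to evaluate the extracting functional on $P(\epsilon)$ and to check that, after feeding in $\det A=(-t)^{2\sum_{j}(-1)^{j-1}a_j}$ and the structural constraints imposed on $A$ by the symmetry $\Delta(t)\doteq\Delta(t^{-1})$, the outcome depends on $A$ only through $u^{\top}Av\doteq\Delta_{D(a)}$ together with $\epsilon$. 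Since $\Delta_{D(a)}=\Delta_{D(b)}$ by hypothesis, this immediately gives $\Delta_{K(a,\epsilon)}\doteq\Delta_{K(b,\epsilon)}$ for each fixed $\epsilon$. To eliminate $\epsilon$ I would invoke the mirror operation $C(c_1,\dots)\mapsto C(-c_1,\dots)$: a direct check shows the mirror image of $K(a,\epsilon)$ is $K(-a,-\epsilon)$, and since Alexander polynomials are mirror-invariant up to $t\mapsto t^{-1}$ and $\Delta_{D(-a)}\doteq\Delta_{D(a)}$, the reduction above forces $\Delta_{K(a,1)}\doteq\Delta_{K(a,-1)}$. The two reductions together make all four polynomials agree. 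I expect the explicit evaluation of the functional on $A\,E_1\,A^{-1}\,E_2\,A$, and the proof that it collapses to a function of $\Delta_{D(a)}$ alone, to be the main obstacle; everything else is formal.

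For mutual inequivalence I would argue combinatorially from Lemma~\ref{Lemma:2-bridge}(c). Each of the four is a $D$-knot whose defining sequence has length $6n+2$, and (c) declares two such equivalent exactly when their sequences are equal or mutually reverse. Applying (c) to the hypothesis $D(a)\not\cong D(b)$, the sequence $a=(a_1,\dots,a_{2n})$ equals neither $b$ nor its reverse $\bar b$. The inserted entries sit in the reversal-symmetric positions $2n+1$ and $4n+2$, so both the sequence of $K(b,\epsilon')$ and its reverse carry $\epsilon'$ in position $2n+1$; hence if $\epsilon\neq\epsilon'$ the entries in position $2n+1$ already disagree, giving $K(a,\epsilon)\not\cong K(b,\epsilon')$ and in particular $K(a,1)\not\cong K(a,-1)$. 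If instead $\epsilon=\epsilon'$, comparing the initial blocks collapses the ``equal'' alternative to $a=b$ and the ``reverse'' alternative to $a=\bar b$, both ruled out, so $K(a,\epsilon)\not\cong K(b,\epsilon)$. These two cases exhaust the six pairs, and the four knots are mutually inequivalent, completing the plan.
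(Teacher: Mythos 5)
Your inequivalence argument is correct and is essentially what the paper does (the paper just cites Lemma~\ref{Lemma:2-bridge}(c) and calls it clear; your positional check at slots $2n+1$ and $4n+2$ is a legitimate expansion of that). Your structural observation is also right: with either sign convention the middle block is the inverse braid, so the defining word of $K(a,\epsilon)$ is $w_a\,\sigma_2^{\pm2}\,w_a^{-1}\,\sigma_1^{\pm2}\,w_a$, which is exactly the paper's equation~(\ref{Eqn:BraidWord}).

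The gap is in the polynomial half, and it is the whole lemma: the claim that the plat-closure extraction applied to $P(\epsilon)=A\,E_1(\epsilon)\,A^{-1}\,E_2(\epsilon)\,A$ ``depends on $A$ only through $u^{\top}Av\doteq\Delta_{D(a)}$ together with $\epsilon$'' is precisely the statement to be proved, and you defer it explicitly as ``the main obstacle.'' You never identify which functional computes the Alexander polynomial of a $4$-plat, nor what the ``structural constraints imposed on $A$ by the symmetry $\Delta(t)\doteq\Delta(t^{-1})$'' are, so there is no way to verify that the collapse is forced by the ingredients you list; as written, the proposal proves the lemma modulo the lemma. (Your mirror step, which eliminates $\epsilon$, is also contingent on that unproved collapse.) The paper closes exactly this hole with an elementary computation: apply the Conway skein relation twice, once at each of the crossings $\sigma_2^{\pm2}$ and $\sigma_1^{\pm2}$. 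The two mixed resolutions are $2$-component split links, hence have vanishing polynomial; the double-$0$ resolution is the connected sum $D(a)\,\sharp\,D(-a_{2n},\cdots,-a_1)\,\sharp\,D(a)$, whose polynomial is $\Delta_{D(a)}(t)^3$ since the middle factor is the mirror of $D(a)$; and the remaining double resolution is $D(a)$ itself. This yields
\[
\Delta_{K(a,\pm1)}(t)=(t^{1/2}-t^{-1/2})^2\,\Delta_{D(a)}(t)^3+\Delta_{D(a)}(t),
\]
which is visibly independent of the sign and a function of $\Delta_{D(a)}$ alone---the collapse you were hoping for, obtained with no Burau algebra. If you insist on the Burau route you must (i) fix the extraction formula for $4$-plats, (ii) actually carry out the $2\times2$ computation with $A^{-1}=\operatorname{adj}(A)/\det A$, and (iii) control the unit ambiguities, which are genuinely dangerous here because the final answer is a \emph{sum} of two terms and ``$\doteq$'' does not interact well with sums.
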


\begin{proof}
From Lemma~\ref{Lemma:2-bridge} above, it is clear that they are all inequivalent $2$-bridge knots. Note that
$D(a_1, a_2, \cdots, a_{2n}, \pm1, -a_{2n}, -a_{2n-1}, \cdots, -a_1, \pm1, a_1, a_2, \cdots, a_{2n})$ has a defining braid
\begin{equation}
\sigma_2^{2a_1} \sigma_1^{2a_2} \cdots \sigma_1^{2a_{2n}} \sigma_2^{\pm2} \sigma_1^{-2a_{2n}} \cdots \sigma_2^{-2a_1} \sigma_1^{\pm 2} \sigma_2^{2a_1} \cdots \sigma_{2n}^{2a_{2n}}. \label{Eqn:BraidWord}
\end{equation}
Let us consider two crossings corresponding to $\sigma_2^{\pm 2}$ and $\sigma_1^{\pm 2}$ in (\ref{Eqn:BraidWord}) and we apply two skein relations at these two locations. Then we have $K(a,1) = K(a,1)_{+,+}$, $K(a, -1) = K(a, -1)_{-,-}$ and
\begin{eqnarray*}
 \Delta_{K(a,1)_{+,+}}(t) & = &  (t^{1/2} - t^{-1/2})^2 \Delta_{K(a,1)_{0,0}}(t)  + (t^{1/2} - t^{-1/2})\Delta_{K(a,1)_{0, -}}(t)  \\ & & + (t^{1/2} - t^{-1/2})\Delta_{K(a,1)_{-, 0}}(t) + \Delta_{K(a,1)_{-, -}}(t),\\
  \Delta_{K(a,-1)_{-,-}}(t) & = &  (t^{1/2} - t^{-1/2})^2 \Delta_{K(a,-1)_{0,0}}(t)  - (t^{1/2} - t^{-1/2})\Delta_{K(a,-1)_{0, +}}(t)  \\ & & - (t^{1/2} - t^{-1/2})\Delta_{K(a,-1)_{+, 0}}(t) + \Delta_{K(a,-1)_{+, +}}(t).
\end{eqnarray*}
It is also clear that
\begin{eqnarray*}
K(a,1)_{0,0} &=& D(a_1, a_2, \cdots, a_{2n}) \sharp D(-a_{2n}, -a_{2n-1}, \cdots, -a_1) \sharp D(a_1, a_2, \cdots, a_{2n})  \\
&=& K(a,-1)_{0,0}, \\
K(a,1)_{-,-} &= &D(a_1, a_2, \cdots, a_{2n}, 0, -a_{2n}, -a_{2n-1}, \cdots, -a_1, 0, a_1, a_2, \cdots, a_{2n})  \\
&=& D(a_1, a_2, \cdots, a_{2n}) = K(a, -1)_{+. +},
\end{eqnarray*}
and $K(a,1)_{-,0}$, $K(a,1)_{0,-}$, $K(a,-1)_{+,0}$ and $K(a,-1)_{0, +}$ are all splitting links with $2$ components.
Therefore we have $\Delta_{K(a,1)}(t) = \Delta_{K(a,-1)}(t)$, and we also have $\Delta_{K(b,1)}(t) = \Delta_{K(b,-1)}(t)$ by the same way.

Next we show that $\Delta_{K(a,1)}(t) = \Delta_{K(b,1)}(t)$: Since $ D(-a_{2n}, -a_{2n-1}, \cdots, -a_1)$ is equivalent to the mirror image of $ D(a_1, a_2, \cdots, a_{2n})$,
\begin{eqnarray*}
\Delta_{D(a_1, a_2, \cdots, a_{2n}) \sharp D(-a_{2n}, -a_{2n-1}, \cdots, -a_1) \sharp D(a_1, a_2, \cdots, a_{2n})}(t) &=& (\Delta_{D(a_1, a_2, \cdots, a_{2n})}(t))^3 ,\\
\Delta_{D(b_1, b_2, \cdots, b_{2n}) \sharp D(-b_{2n}, -b_{2n-1}, \cdots, -b_1) \sharp D(b_1, b_2, \cdots, b_{2n})}(t) &=&  (\Delta_{D(b_1, b_2, \cdots, b_{2n})}(t))^3.
\end{eqnarray*}
Therefore $ \Delta_{D(a_1, a_2, \cdots, a_{2n})}(t) = \Delta_{D(b_1, b_2, \cdots, b_{2n})}(t)$ implies $\Delta_{K(a,1)}(t) = \Delta_{K(b,1)}(t)$.
\end{proof}

\begin{rem}
Lemma~\ref{Lemma:Same-Alex} above arose from Kanenobu's  construction in Theorem 1 of~\cite{Kanenobu:89}. He constructed arbitrarily many skein equivalent, amphicheiral, fibered $2$-bridge knots.  Notice that our notation
$D(a_1,  a_2, a_3, a_4, \cdots, a_{2n-1}, a_{2n})$
in Definition~\ref{Definition:2bridge} above is
$D(a_1, -a_2, a_3, -a_4, \cdots, a_{2n-1},  -a_{2n})$
in Kanenobu's notation.
\end{rem}

\smallskip

\subsection{Fibered $2$-bridge knots - Construction 1.}

\begin{defn}
\label{Definition:TorusKnot}
For each integer $n>0$,
we define $K_n = D(\underbrace{1,1, \cdots, 1, 1}_{2n})$ and
\[
K_n(\pm 1) = D(\underbrace{1,1, \cdots, 1, 1}_{2n} , \pm 1, \underbrace{-1,-1, \cdots, -1, -1}_{2n}, \pm 1, \underbrace{1,1, \cdots, 1, 1}_{2n}).
\]
\end{defn}

Let us consider a representation $\phi: \text{Br}_3 \to SL(2, \mathbb{Z})$ defined by
\[
\phi(\sigma_1) = \begin{pmatrix} 1 & -1\\ 0 & 1 \end{pmatrix}\ \textrm{and} \ \ \phi(\sigma_2) = \begin{pmatrix} 1 & 0 \\ 1 & 1 \end{pmatrix}.
\]
Here $\text{Br}_3$ denotes a braid group with 3 strands.
Then a $2$-bridge knot
$b(p,q) = D(a_1, a_2, \cdots, a_{2n})$  satisfies the equation
\[
 \pm \begin{pmatrix} r & q \\ s & p \end{pmatrix} = \phi(\sigma_2)^{2a_1} \phi(\sigma_1)^{2 a_2} \phi(\sigma_2)^{2a_3} \phi(\sigma_1)^{2 a_4} \cdots \phi(\sigma_1)^{2a_{2n}}
\]
for some integers $p, q, r$ and $s$ satisfying $pr - qs =1$.

\smallskip

\begin{lem}
\label{Lemma:const-1}
For each integer $n>0$, we have
\begin{eqnarray*}
K_n(\pm 1) &=& b((2n+1)(4n+1)(4n+3), 2n(4n+1)(4n+3)  \pm 2 (2n+1)) \\
&=& b( (2n+1)(4n+1)(4n+3),  -(4n+1)(4n+3) \pm 2 (2n+1) ).
\end{eqnarray*}
\end{lem}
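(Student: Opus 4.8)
The plan is to convert the continued-fraction data defining $K_n(\pm1)$ into the single $SL(2,\mathbb{Z})$ product prescribed just before the statement, and then read off the second column. First I would record the two elementary blocks $A := \phi(\sigma_2)^2 = \left(\begin{smallmatrix}1&0\\2&1\end{smallmatrix}\right)$ and $B := \phi(\sigma_1)^2 = \left(\begin{smallmatrix}1&-2\\0&1\end{smallmatrix}\right)$, together with $A^{-1}$ and $B^{-1}$. Writing $\epsilon=\pm1$ for the sign in $K_n(\pm1)$ and keeping track of the parity of each position in the word $D(c_1,\dots,c_{6n+2})$ (odd positions carry $\phi(\sigma_2)$ and even positions carry $\phi(\sigma_1)$, since $6n+2$ is even), the three runs of $\pm1$'s and the two isolated $\pm1$'s assemble into
\[
M = (AB)^n\, A^{\epsilon}\, (B^{-1}A^{-1})^n\, B^{\epsilon}\, (AB)^n = (AB)^n\, A^{\epsilon}\, (AB)^{-n}\, B^{\epsilon}\, (AB)^n,
\]
using $B^{-1}A^{-1}=(AB)^{-1}$. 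By the stated correspondence $M = \pm\left(\begin{smallmatrix}r&q\\s&p\end{smallmatrix}\right)$, so $p$ and $q$ are, up to a global sign, the second-column entries $M_{22}$ and $M_{12}$.

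The computational heart is $(AB)^m$. A direct computation gives $AB = \left(\begin{smallmatrix}1&-2\\2&-3\end{smallmatrix}\right) = -I+N$ with $N = \left(\begin{smallmatrix}2&-2\\2&-2\end{smallmatrix}\right)$ satisfying $N^2=0$. Hence, for every integer $m$,
\[
(AB)^m = (-1)^m\,(I - mN),
\]
an expression linear in $m$. This is the key point: it collapses the otherwise unpleasant $n$-th powers into explicit matrices, so that $M$ reduces to a product of five explicit matrices whose entries are polynomials in $n$ and $\epsilon$. Multiplying out and using $\epsilon^2=1$, I would extract the second column. The $(2,2)$-entry turns out to be independent of $\epsilon$ and equals $-(2n+1)(4n+1)(4n+3)$, while the $(1,2)$-entry equals $-\bigl(2n(4n+1)(4n+3)+2\epsilon(2n+1)\bigr)$. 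Choosing the global sign to be $-$ then gives the first asserted identity, with $p=(2n+1)(4n+1)(4n+3)$ and $q=2n(4n+1)(4n+3)\pm2(2n+1)$.

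For the second identity I would appeal to Lemma~\ref{Lemma:2-bridge}(a): setting $q=2n(4n+1)(4n+3)\pm2(2n+1)$ and $q'=-(4n+1)(4n+3)\pm2(2n+1)$, one computes $q-q' = (2n+1)(4n+1)(4n+3) = p$, so $q\equiv q'\pmod p$ and therefore $b(p,q)=b(p,q')$. I expect the only genuine obstacle to be bookkeeping: correctly matching the alternation of $\sigma_1$ and $\sigma_2$ across the three blocks so that the decomposition of $M$ is right, and then tracking the global sign and the two occurrences of $\epsilon$ through the five-fold product. Once the nilpotent form of $AB$ is in hand no real difficulty remains, since every power becomes linear in its exponent and the crucial cancellations—in particular the disappearance of $\epsilon$ from $M_{22}$—are forced.
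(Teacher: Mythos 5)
Your proof is correct, and for the first displayed equality it is essentially the paper's own argument: the paper forms exactly the same five-factor word $(AB)^n A^{\epsilon}(AB)^{-n}B^{\epsilon}(AB)^n$ and multiplies it out, quoting the closed form
\[
(AB)^n = \begin{pmatrix} (-1)^{n-1}(2n-1) & (-1)^n 2n \\ (-1)^{n-1}2n & (-1)^n(2n+1)\end{pmatrix},
\]
which is precisely your $(-1)^n(I-nN)$; your nilpotent decomposition $AB=-I+N$, $N^2=0$, is simply a tidier way of producing (and proving) that formula. One small slip: the entries you report carry an extra global factor $(-1)^{n+1}$ that you suppressed. For $n=1$, $\epsilon=1$ the product is $\begin{pmatrix} -55 & 76 \\ -76 & 105 \end{pmatrix}$, so the $(2,2)$-entry is $+105$, not $-105$; in general $M_{22}=(-1)^{n+1}(2n+1)(4n+1)(4n+3)$ and $M_{12}=(-1)^{n+1}\bigl(2n(4n+1)(4n+3)+2\epsilon(2n+1)\bigr)$. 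Since the correspondence $M=\pm\begin{pmatrix} r & q \\ s & p \end{pmatrix}$ is only defined up to a global sign, this is harmless — but "choosing the global sign to be $-$" should read "choosing the sign that makes $p>0$," i.e.\ $(-1)^{n+1}$. Where you genuinely diverge from the paper is the second equality: you observe that, for either choice of sign,
\[
q-q' = 2n(4n+1)(4n+3)+(4n+1)(4n+3) = (2n+1)(4n+1)(4n+3)=p,
\]
so $q\equiv q' \pmod p$ and the first clause of Lemma~\ref{Lemma:2-bridge}(a) applies directly. The paper instead carries out a several-line modular computation showing $qq'\equiv 1 \pmod p$, invoking the second clause of that lemma. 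Both are legitimate appeals to the Schubert classification of $2$-bridge knots, but your route is shorter: it replaces the paper's congruence calculation with a one-line exact identity.
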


\begin{proof}
Since
\[
\begin{pmatrix} 1 & 0 \\ 2 & 1 \end{pmatrix} \begin{pmatrix} 1 & -2 \\ 0 & 1 \end{pmatrix} =
 \begin{pmatrix} 1 & -2 \\ 2 & -3 \end{pmatrix} \ {\textrm{and}}
 \]
 \[
  \begin{pmatrix} 1 & -2 \\ 2 & -3 \end{pmatrix}^n =  \begin{pmatrix} (-1)^{n-1} (2n -1) & (-1)^n 2n \\ (-1)^{n-1} 2n  & (-1)^n (2n+1) \end{pmatrix},
  \]
$K_n = b(2n + 1, 2n) = b(2n+1, -1)$.
So, from the defining word of $K_n(\pm 1)$, we get
\[
  \begin{pmatrix} 1 & -2 \\ 2 & -3 \end{pmatrix}^n
\begin{pmatrix} 1 & 0 \\  \pm 2& 1\end{pmatrix}
  \begin{pmatrix} 1 & -2 \\ 2 & -3 \end{pmatrix}^{-n}
\begin{pmatrix} 1 & \mp 2 \\ 0 & 1\end{pmatrix}
  \begin{pmatrix} 1 & -2 \\ 2 & -3 \end{pmatrix}^n
\]
and it gives
\[
K_n(\pm 1) = b( (2n+1)(4n+1)(4n+3),  2n(4n+1) (4n+3) \pm  2(2n+1)).
\]
Since,  for each $\varepsilon \in \{ \pm 1\}$,
\begin{eqnarray*}
\lefteqn{\{2n (4n+1)(4n+3) + 2\varepsilon (2n+1) \} \{ -(4n+1)(4n+3) + 2\varepsilon (2n+1)\} } \\
&=& -2n( 4n+1)^2 (4n+3)^2 + 4\varepsilon n (2n+1)(4n+1)(4n+3) \\
& &  -2\varepsilon (2n+1)(4n+1)(4n+3) + 4 (2n+1)^2 \\
&\equiv & (4n+1)^2 (4n+3)^2  + (4n+1)(4n+3) +1  \pmod{ (2n+1)(4n+1)(4n+3) } \\
&=& 4(2n+1)^2(4n+1)^2 (4n+3)^2 + 1  \\
&\equiv& 1  \pmod{  (2n+1)(4n+1)(4n+3) },
\end{eqnarray*}
we finally get
\[ K_n(\pm 1) = b( (2n+1)(4n+1)(4n+3),  -(4n+1)(4n+3) \pm 2 (2n+1) ). \]
\end{proof}

\smallskip

\subsection{Fibered $2$-bridge knots - Construction 2.}

\begin{defn}
\label{Definition:KnotExample}
Let us define inductively a family of $2$-bridge knots as follows:
\begin{itemize}
\item[(a)] Set  $W(0,0) = 1, 1$ and $K(0,0) = D(W(0,0))$.
\item[(b)] For each integer $n>0$ and $i = \sum_{j=0}^{n-1} \varepsilon_j 2^{j}$ with $\varepsilon_j \in \{ 0, 1\}$, define a list $W(n, i)$ by
\[\ \ \ \ \
W(n\!-\!1, \! \sum_{j=0}^{n-2} \varepsilon_j 2^{j}),
(-1)^{\varepsilon_{n\!\!-\!\!1}\!+\!1}, -W(n\!-\!1, \! \sum_{j=0}^{n-2} \varepsilon_j 2^{j}), (-1)^{\varepsilon_{n\!\!-\!\!1}\!+\!1},  W(n\!-\!1, \! \sum_{j=0}^{n-2} \varepsilon_j 2^{j})
\]
and $K(n, i) = D(W(n,i))$.
\end{itemize}
\end{defn}

\begin{rem} For example, we have
\begin{eqnarray*}
K(1,0) &=&  D( 1, 1, -1, -1, -1, -1, 1, 1) \\
K(1, 1) &=& D(1, 1, 1, -1, -1, 1, 1, 1) \\
K(2, 0) &=&  D(1, 1, -1, -1, -1, -1, 1, 1, -1, -1, -1, 1, 1, 1, 1, -1, -1, -1, \\ & &1, 1, -1, -1, -1, -1, 1, 1) \\
K(2,1) &=&  D(1, 1, 1, -1, -1, 1, 1, 1, -1, -1, -1, -1, 1, 1, -1, -1, -1, -1, \\ & & 1, 1, 1, -1, -1, 1, 1, 1)\\
K(2, 2) &=&  D(1, 1, -1, -1, -1, -1, 1, 1, 1, -1, -1, 1, 1, 1, 1, -1, -1, 1, \\ & & 1, 1, -1, -1, -1, -1, 1, 1) \\
K(2,3) &=&  D(1, 1, 1, -1, -1, 1, 1, 1, 1, -1, -1, -1, 1, 1, -1, -1, -1, 1, \\ & & 1, 1, 1, -1, -1, 1, 1, 1)
\end{eqnarray*}
\end{rem}

\smallskip

\begin{lem}
\label{Lemma:Const-2}
$K(n, i) = b(p(n), q(n,i))$, where $p(n)$ and $q(n,i)$ are defined by the following recursion relation:
\begin{itemize}
\item[(a)] $p(0) = 3$ and $p(n+1) = p(n) \{ 4p(n)^2 -1\}$, for all $n\ge 0$
\item[(b)] $q(0,0) = 2$ and
\noindent
 \[q(n+1,  \sum_{j=0}^{n} \varepsilon_j 2^{j}) = q(n,  \sum_{j=0}^{n-1} \varepsilon_j 2^{j}) \{ 4p(n)^2 -1\} + 2(-1)^{\varepsilon_n + 1} p(n),\]
\noindent
for all $n\ge 0$ and $i= \sum_{j=0}^{n} \varepsilon_j 2^{j}$  with $\varepsilon_j \in \{0, 1\}$.
\end{itemize}
\end{lem}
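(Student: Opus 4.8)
The plan is to convert the recursive word structure of $W(n,i)$ in Definition~\ref{Definition:KnotExample} into a recursion for the associated $SL(2,\mathbb{Z})$ matrices under the representation $\phi$ of the previous subsection, exactly in the spirit of the proof of Lemma~\ref{Lemma:const-1}. For a word $W=(a_1,\dots,a_{2m})$ write $M(W)=\phi(\sigma_2)^{2a_1}\phi(\sigma_1)^{2a_2}\cdots\phi(\sigma_1)^{2a_{2m}}$, so that $M(W)=\pm\left(\begin{smallmatrix} r & q \\ s_0 & p\end{smallmatrix}\right)$ with $pr-qs_0=1$ encodes $D(W)=b(p,q)$. Setting $M_{n,i}=M(W(n,i))$, the goal is to show that the bottom-right and top-right entries of $M_{n,i}$ realize $p(n)$ and $q(n,i)$ up to a controlled overall sign.

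First I would record three identities for $\phi$, all by direct computation. With $J=\left(\begin{smallmatrix}0&1\\1&0\end{smallmatrix}\right)$ and $C=\left(\begin{smallmatrix}1&0\\0&-1\end{smallmatrix}\right)$ one has $J\phi(\sigma_1)J=\phi(\sigma_2)^{-1}$, $J\phi(\sigma_2)J=\phi(\sigma_1)^{-1}$, and $C\phi(\sigma_i)C^{-1}=\phi(\sigma_i)^{-1}$. The last identity produces the key structural invariant. Each $W(n,i)$ is a palindrome, by an immediate induction on the symmetric recursion of Definition~\ref{Definition:KnotExample} with base $W(0,0)=(1,1)$. Transposing the matrix product reverses the order of the factors and, since $\phi(\sigma_1)^{T}=\phi(\sigma_2)^{-1}$ and $\phi(\sigma_2)^{T}=\phi(\sigma_1)^{-1}$, replaces each generator by the inverse of the other; for a palindrome this restores the original generator pattern with all exponents negated, which by the third identity equals $C M_{n,i} C^{-1}$. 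Comparing $M_{n,i}^{T}=C M_{n,i} C^{-1}$ entrywise gives $[M_{n,i}]_{12}=-[M_{n,i}]_{21}$, that is, $s_0=-q$ for every $n$ and $i$.

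Next I would set up the matrix recursion. Since $W(n,i')$ has even length $3^{n+1}-1$, the middle block $-W(n,i')$ inside $W(n+1,i)$ begins at an even position, so its two braid generators are interchanged relative to $M_{n,i'}$; the first two identities then identify its matrix with $J M_{n,i'} J$. Hence, with $\delta=(-1)^{\varepsilon_n+1}$,
\[
M_{n+1,i}=M_{n,i'}\,\phi(\sigma_2)^{2\delta}\,\bigl(J M_{n,i'} J\bigr)\,\phi(\sigma_1)^{2\delta}\,M_{n,i'}.
\]
I would multiply out this fivefold product, substitute the invariant $s_0=-q$ together with the determinant relation $pr+q^2=1$, and read off the relevant entries. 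The computation collapses to $[M_{n+1,i}]_{22}=-p\,(4p^2-1)$ and $[M_{n+1,i}]_{12}=-\bigl(q(4p^2-1)+2\delta p\bigr)$, where $(p,q)=(p(n),q(n,i'))$ carry the running overall sign.

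Finally I would reconcile signs against the base case $M(W(0,0))=\phi(\sigma_2)^2\phi(\sigma_1)^2=\left(\begin{smallmatrix}1&-2\\2&-3\end{smallmatrix}\right)$, which gives $p(0)=3$, $q(0,0)=2$ with overall factor $-1$. The two displayed formulas show that this overall sign merely alternates at each step, so it cancels consistently and the entries reproduce exactly $p(n+1)=p(n)(4p(n)^2-1)$ and $q(n+1,i)=q(n,i')(4p(n)^2-1)+2(-1)^{\varepsilon_n+1}p(n)$, as claimed. The main obstacle here is bookkeeping rather than conceptual: correctly detecting the parity shift that forces the $J$-conjugation on the middle block, and carrying the alternating global sign through the induction so that it cancels. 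The palindrome invariant $s_0=-q$ is precisely what makes the otherwise unwieldy $2\times2$ product collapse to the clean two-term recursion, so establishing that invariant cleanly is the step I would treat most carefully.
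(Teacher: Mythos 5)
Your proposal is correct and takes essentially the same route as the paper's proof: both convert the word recursion of Definition~\ref{Definition:KnotExample} into the matrix recursion $M_{n+1,i}=M\,\phi(\sigma_2)^{2\delta}\,M^{-1}\,\phi(\sigma_1)^{2\delta}\,M$ under the representation $\phi$, expand the product using $\det M=1$, and read off $p(n+1)$ and $q(n+1,i)$ from the second column, fixing the overall sign by $p>0$. The only difference is one of rigor rather than method: you explicitly prove, via the palindromicity of $W(n,i)$ and the $J$- and $C$-conjugation identities, that the parity-shifted middle block $-W(n,i)$ contributes $JM J=M^{-1}$, a step the paper's proof uses implicitly when it writes the conjugation formula without comment.
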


\begin{proof}
We get an initial term $p(0)= p(0,0) = 3$ and $q(0,0) = 2$ from the relation
\[
\pm \begin{pmatrix} r(0, 0) & q(0, 0) \\ s(0, 0) & p(0, 0) \end{pmatrix} = \begin{pmatrix} 1 & 0 \\ 2 & 1 \end{pmatrix} \begin{pmatrix} 1 & -2 \\ 0 & 1 \end{pmatrix} =
 \begin{pmatrix} 1 & -2 \\ 2 & -3 \end{pmatrix} .
 \]

For a recursive relation, if $i= \sum_{j=0}^{n} \varepsilon_j 2^{j}$ and, for the simplicity of writing, let
\[
\begin{pmatrix} r(n,  \sum_{j=0}^{n-1} \varepsilon_j 2^{j}) & q(n,  \sum_{j=0}^{n-1} \varepsilon_j 2^{j}) \\ s(n,  \sum_{j=0}^{n-1} \varepsilon_j 2^{j}) & p(n,  \sum_{j=0}^{n-1} \varepsilon_j 2^{j}) \end{pmatrix}
\begin{pmatrix} 1 & 0 \\ (-1)^{\varepsilon_n +1} & 1\end{pmatrix}
= \begin{pmatrix} r & q \\ s & p \end{pmatrix},
\]
then
\begin{eqnarray*}
%\pm \lefteqn{ \begin{pmatrix} r(n+1,  i) & q(n+1,  i) \\ s(n+1,  i) & p(n+1,  i)
%\end{pmatrix} }
% & = &
\pm \begin{pmatrix} r(n\!+\!1,  i)\! &\! q(n\!+\!1,  i) \\ s(n\!+\!1,  i)\! & \!p(n\!+\!1,  i) \end{pmatrix}
\! =\! \begin{pmatrix} r\! &\! q \\ s\! &\! p \end{pmatrix}
\! \begin{pmatrix} 1\!\! &\!\! 0 \\ (-1)^{\varepsilon_n \!\!+\!\!1}2\! &\! 1\end{pmatrix}
\! \begin{pmatrix} r\! &\! q \\ s\! &\! p \end{pmatrix}^{-1}
\! \begin{pmatrix} 1\!\! &\!\! -(-1)^{\varepsilon_n \!+\!1}2 \\ 0\!\! &\!\! 1\end{pmatrix}
\! \begin{pmatrix} r\! &\! q \\ s\! &\! p \end{pmatrix}.
\end{eqnarray*}
From this, we get
\begin{eqnarray*}
\pm p(n+1, i) &\! =\! & p\cdot  \{p \cdot  r  - q \cdot  s - 4p^2\} = p\cdot  \{1- 4p^2\} \\
\pm q(n+1,i) &\! =\! & -q^2 \cdot  s - 2p^2 \cdot  r \cdot  (-1)^{\varepsilon_n +1}  -4p^2 \cdot  q + p \cdot  q \cdot  r + 2p \cdot  q \cdot  s \cdot  (-1)^{\varepsilon_n +1}  \\
 &\! =\! & q\cdot  \{p \cdot r - q \cdot  s\} -2p \cdot  (-1)^{\varepsilon_n +1 } \cdot  \{p \cdot  r -q \cdot  s\} -4p^2 \cdot  q  \\
 &\! =\! & q \cdot  \{1-4p^2\} -2 (-1)^{\varepsilon_n +1 }  \cdot  p.
\end{eqnarray*}
Hence, since $p >0$, we finally get
\begin{eqnarray*}
p(n+1) &\! =\! &  p(n) \cdot  \{ 4p(n)^2 - 1\} \ \ {\textrm{and}} \\
q(n+1,i) &\! =\! &  q(n,\sum_{j=0}^{n-1} \varepsilon_j 2^{j}) \cdot  \{4p(n)^2 - 1\}  +2(-1)^{\varepsilon_n +1 } \cdot  p(n) .
\end{eqnarray*}
\end{proof}

\begin{rem}
By a computation above, we know that  $q(n,i)$ is an even integer and
\begin{eqnarray*}
p(n+1) &=& p(0) \cdot  \prod_{k=0}^n \{ 4p(k)^2 -1\}, \\
q(n+1, \sum_{j=0}^{n} \varepsilon_j 2^{j}) &=& \frac{2}{3} p(n+1) + \sum_{j=0}^n \frac{2(-1)^{\varepsilon_j + 1}  \cdot p(n+1)}{4p(j)^2 -1} \label{Eqn:q(n,i)}.
\end{eqnarray*}
\end{rem}

\begin{lem}
\label{Lemma:q'}
Suppose that $q'(n,i)$ is an integer defined by the following recursive relation:
\begin{eqnarray*}
q'(0, 0) & = & -1, \ \ \ {\textrm{and}}  \\
q'(n+1,  \sum_{j=0}^{n} \varepsilon_j 2^{j}) & = &  q'(n,  \sum_{j=0}^{n-1} \varepsilon_j 2^{j}) \cdot  \{ 4p(n)^2 -1\} + 2(-1)^{\varepsilon_n + 1}
\cdot  p(n)
\end{eqnarray*}
for all $n\ge 0$ and $i= \sum_{j=0}^{n} \varepsilon_j 2^{j}$ with $\varepsilon_j \in \{ 0, 1\}$. Then it satisfies
\[
q(n,i) \cdot q'(n,i) \equiv 1 \pmod{ p(n) }
\]
for all $n\ge 0$ and $i = 0, 1, 2, \cdots, 2^n -1$.
\end{lem}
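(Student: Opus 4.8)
The plan is to prove the congruence by induction on $n$, exploiting the fact that $q$ and $q'$ obey the \emph{same} recursion (differing only in their initial value) together with the multiplicative structure of $p(n)$. First I would dispose of the base case $n=0$: since $p(0)=3$, $q(0,0)=2$ and $q'(0,0)=-1$, we get $q(0,0)\,q'(0,0)=-2\equiv 1\pmod{3}$. For the inductive step, fix $i=\sum_{j=0}^{n}\varepsilon_j 2^{j}$, write $i'=\sum_{j=0}^{n-1}\varepsilon_j 2^{j}$ for its truncation, and abbreviate $P=p(n)$, $Q=q(n,i')$, $Q'=q'(n,i')$ and $\eta=(-1)^{\varepsilon_n+1}$ (so $\eta^2=1$). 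With this notation the three recursions read
\[
q(n+1,i)=Q(4P^2-1)+2\eta P,\qquad q'(n+1,i)=Q'(4P^2-1)+2\eta P,\qquad p(n+1)=P(4P^2-1).
\]

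The crucial observation is that the two factors of $p(n+1)$ are coprime: because $4P^2-1\equiv -1\pmod{P}$, one has $\gcd(P,4P^2-1)=1$. Hence by the Chinese Remainder Theorem it suffices to verify $q(n+1,i)\,q'(n+1,i)\equiv 1$ separately modulo $P$ and modulo $4P^2-1$. Modulo $P$ the term $2\eta P$ vanishes and $4P^2-1\equiv -1$, so $q(n+1,i)\equiv -Q$ and $q'(n+1,i)\equiv -Q'$, whence their product is $QQ'$, which is $\equiv 1\pmod{P}$ by the inductive hypothesis. Modulo $4P^2-1$ the leading terms vanish, so both factors reduce to $2\eta P$, and their product is $(2\eta P)^2=4P^2\equiv 1\pmod{4P^2-1}$ since $4P^2=(4P^2-1)+1$. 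Reassembling the two residues via CRT yields $q(n+1,i)\,q'(n+1,i)\equiv 1\pmod{p(n+1)}$, closing the induction.

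I expect no serious obstacle here; the proof is essentially forced once the coprime splitting $p(n+1)=p(n)\cdot(4p(n)^2-1)$ is noticed. The only step genuinely needing care is the verification that $\gcd(p(n),4p(n)^2-1)=1$, since this is exactly what legitimizes treating the two residue computations independently and then recombining them. The structural reason the argument works is that the seed-dependent data of $q$ and $q'$ survives only modulo $p(n)$ (where it feeds the inductive hypothesis), while modulo $4p(n)^2-1$ both sequences collapse to the common universal term $2\eta\,p(n)$, whose square is automatically $\equiv 1$.
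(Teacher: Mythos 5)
Your proof is correct, and it handles the inductive step by a genuinely different mechanism than the paper. Both arguments are inductions on $n$ with the same base case $2\cdot(-1)\equiv 1 \pmod 3$, but the paper proves the step by direct expansion: it multiplies out the product of the two recursion formulas, substitutes $q(n,i_n)\cdot q'(n,i_n)=k\cdot p(n)+1$, and regroups the result as $1$ plus three terms each visibly divisible by $p(n+1)=p(n)\{4p(n)^2-1\}$, using the identity $(4p(n)^2-1)^2+4p(n)^2=1+4p(n)^2(4p(n)^2-1)$. You instead split the modulus: since $\gcd\bigl(p(n),\,4p(n)^2-1\bigr)=1$, it suffices to check the congruence separately modulo $p(n)$ (where both recursions act by negation, $q(n+1,i)\equiv -Q$ and $q'(n+1,i)\equiv -Q'$, so the inductive hypothesis passes through unchanged) and modulo $4p(n)^2-1$ (where both sequences collapse to the common term $2\eta p(n)$, whose square is $4p(n)^2\equiv 1$), and then recombine by the Chinese Remainder Theorem. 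The two routes are comparable in length, but yours is more transparent about \emph{why} the lemma holds --- the seed-dependent data of $q$ and $q'$ matters only modulo $p(n)$, while the new prime-to-$p(n)$ factor of the modulus is killed by a universal identity independent of the seeds --- whereas the paper's computation is self-contained and needs no number-theoretic input at all (not even the coprimality observation), only a regrouping of terms.
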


\begin{proof}
Let us prove it by induction: $q(0,0) \cdot q'(0,0) \equiv 1$ (mod $3$) is clear. Suppose that it is true for all $n \ge 0$ and for each $i= 0, 1,2, \cdots 2^n-1$. Let $i= \sum_{j=0}^{n} \varepsilon_j 2^{j}$ and we denote $i_{n} = \sum_{j=0}^{n-1} \varepsilon_j 2^{j}$.
Then, since  $p(n+1) = p(n) (4p(n)^2 -1)$ and $q(n, i_n)\cdot  q'(n, i_n) = k \cdot p(n) + 1$ for some integer $k$, we get
\begin{eqnarray*}
%\lefteqn{q(n+1, i) \cdot q'(n+1,i)}
 q(n+1, i) \cdot q'(n+1,i)
 &=& \{q(n, i_n) \cdot  (4 p(n)^2 -1)  + 2(-1)^{\varepsilon_n +1} \cdot  p(n)\} \\ & &  \cdot  \{q'(n, i_n) \cdot (4 p(n)^2 -1) + 2(-1)^{\varepsilon_n +1} \cdot  p(n)\}\\
&=& (k\cdot  p(n) + 1)\cdot  (4p(n)^2-1)^2  + 4p(n)^2 \\ & & + 2(-1)^{\varepsilon_n +1}  \cdot  p(n) \cdot   (4p(n)^2 -1) \cdot  \{ q(n,i_n) + q'(n, i_n)\}    \\
&=& 1 + 4p(n)^2 \cdot  (4p(n)^2 -1) +  k \cdot  p(n) \cdot  (4p(n)^2 -1)^2  \\ & & +  2(-1)^{\varepsilon_n +1} \cdot  p(n) \cdot  (4p(n)^2 -1) \cdot  \{ q(n,i_n)   + q'(n, i_n)\}   \\
&\equiv& 1 \  \pmod{ p(n+1) }.
\end{eqnarray*}
\end{proof}

\subsection{Covering linkage invariants}

Let $\widehat{b(p,q)}$ be a dihedral covering link of an oriented $2$-bridge knot $b(p,q)$ with a relatively prime pair of odd integers $(p,q)$ satisfying $-p < q < p$. Then
\[
\widehat{b(p,q)} = L_1 \cup L_2 \cup \cdots \cup L_{p}
\]
 is an oriented $p$-component link and we can give an ordering to the link components so that its linking number satisfies
\[
\text{lk}(L_r, L_s) = \begin{cases} (-1)^{[\frac{q}{p} (s-r)]}, &  r \ne s \\
 -\sum_{k \ne r} \text{lk} (L_r, L_k), & r=s  \end{cases}
\]
where $[x]$ means the greatest integer less than or equal to $x$.
It is well known that $\text{lk}(L_r, L_s) = \text{lk}(L_s, L_r)$ and $\text{lk}(L_r, L_s) = \text{lk}(L_{r-k}, L_{s-k})$  for each integer $k$, where the subindices are computed in modulo $p$~\cite{BZ:03} so that the covering linkage matrix
\[
\begin{pmatrix} \text{lk} (L_r, L_s) \end{pmatrix}
\]
is a symmetric circulant matrix~\cite{pre06032536}.

\begin{rem}
For each $2$-bridge knot $b(p,q)$, we can always choose a relatively prime pair of odd integers  $p$ and $q$ satisfying  $p>1$ and $-p< q < p$~\cite{BZ:03}. So from now on we assume this condition.
\end{rem}

\begin{defn}
For a given $2$-bridge knot $b(p,q)$,
we define a diagonal element $d(b(p,q))$ by
\[
d(b(p,q)) = -\sum_{k \ne r} \text{lk} (L_r, L_k) = - \sum_{k=1}^{p-1} (-1)^{[\frac{q}{p} k]}  = - 2 \sum_{k=1}^{(p-1)/2} (-1)^{[\frac{q}{p} k]}
\]
where $\widehat{b(p,q)} = L_1 \cup L_2 \cup \cdots \cup L_{p}$ is a dihedral covering link of $b(p,q)$.
\end{defn}

\smallskip

\begin{thm}
\label{Thm:diagonal}
Suppose that $p$ and $q$ are relatively prime odd integers satisfying $p \ge 3$ and $0 < q < p$. Let $b(p, q)$ be a $2$-bridge knot. Then
\begin{equation}\label{Eqn:diagonal}
d(b(4p^3 - p, (4p^2 -1) q  \pm  2p ) ) = d(b(p,q)) \pm 2.
\end{equation}
\end{thm}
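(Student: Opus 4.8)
The plan is to reduce the whole statement to the arithmetic sum defining the diagonal element and to treat $b(4p^3-p,(4p^2-1)q\pm2p)$ as a small perturbation of $b(p,q)$. Set $N=4p^2-1$, so that $4p^3-p=pN$, the new knot is $b(pN,\,Nq\pm2p)$, and
\[
\frac{Nq\pm2p}{pN}=\frac{q}{p}\pm\frac{2}{N}.
\]
Writing $\Delta_k=\lfloor (Nq\pm2p)k/(pN)\rfloor-\lfloor qk/p\rfloor=\lfloor qk/p\pm2k/N\rfloor-\lfloor qk/p\rfloor$ for the carry produced by the perturbation, the definition of $d$ gives $d(b(pN,Nq\pm2p))=-\sum_{k=1}^{pN-1}(-1)^{\lfloor qk/p\rfloor}(-1)^{\Delta_k}$. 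The first elementary input is that $(-1)^{\lfloor qk/p\rfloor}$ is anti-periodic of period $p$ (because $q$ is odd), so, since $[0,pN)$ consists of $N$ such blocks and $N$ is odd, the unperturbed sum collapses:
\[
\sum_{k=1}^{pN-1}(-1)^{\lfloor qk/p\rfloor}=\sum_{k=1}^{p-1}(-1)^{\lfloor qk/p\rfloor}=-\,d(b(p,q)).
\]

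Subtracting, and using $(-1)^{\Delta_k}-1=-2$ when $\Delta_k$ is odd and $0$ otherwise, I get the exact difference formula
\[
d(b(pN,Nq\pm2p))-d(b(p,q))=2\!\!\sum_{\substack{1\le k\le pN-1\\ \Delta_k\ \mathrm{odd}}}\!\!(-1)^{\lfloor qk/p\rfloor},
\]
so it remains to show the right-hand sum equals $\pm1$. Next I would split the index set according to divisibility by $p$. On the multiples $k=jp$ (for $1\le j\le N-1$) the carry simplifies: in the $+2p$ case $\Delta_{jp}=\lfloor 2pj/N\rfloor$ and the weight is $(-1)^{qj}=(-1)^{j}$, so these terms contribute $\sum_{j=1}^{N-1}(-1)^{j}\,[\,\lfloor 2pj/N\rfloor\ \mathrm{odd}\,]$, which (since $\sum_{j}(-1)^{j}=0$) equals $-\tfrac12\sum_{j=1}^{N-1}(-1)^{\,j+\lfloor 2pj/N\rfloor}$; the $-2p$ case is identical after complementing the parity, and changes only an overall sign. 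The remaining, non-multiple, terms must be shown to cancel in the sum; this cancellation is genuine (it is not term-by-term, so a bare reflection $k\mapsto pN-k$ only shows evenness), and I would establish it by reorganizing the non-multiples through the correspondence $k\leftrightarrow(k\bmod p,\ k\bmod N)$ available since $\gcd(p,N)=1$.

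Everything then comes down to the single identity
\[
\sum_{j=1}^{N-1}(-1)^{\,j+\lfloor 2pj/N\rfloor}=-2\qquad(N=4p^2-1),
\]
which yields the net $\pm2$ and hence $d(b(p,q))\pm2$. Here I would use the parity reduction $(-1)^{\lfloor 2pj/N\rfloor}=(-1)^{\,2pj\bmod N}$ together with the reflection $j\mapsto N-j$, under which the summand is invariant (as $N$ is odd), to recast the identity as the lattice-point count $\#\{0\le j<N:\ j+\lfloor 2pj/N\rfloor\ \mathrm{odd}\}=\tfrac{N+1}{2}=2p^2$, and evaluate it by grouping $j$ according to the value $v=\lfloor 2pj/N\rfloor\in\{0,\dots,p-1\}$ and counting integers of the prescribed parity in each block. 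I expect this parity count, together with the non-multiple cancellation above, to be the main obstacle: it is precisely here that the special modulus $N=4p^2-1$ is indispensable, through the relation $(2p)^2\equiv1\pmod N$ (so that $j\mapsto 2pj$ is an involution of $\mathbb{Z}/N$ with a controlled fixed-point set) and the factorization $N=(2p-1)(2p+1)$, which together pin down the block lengths $\lceil(v+1)N/(2p)\rceil-\lceil vN/(2p)\rceil$ and force the parities to sum to the required value, settling both signs simultaneously.
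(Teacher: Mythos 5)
Your reduction is correct as far as it goes: the anti-periodicity of $(-1)^{\lfloor qk/p\rfloor}$, the collapse of the unperturbed sum, the exact difference formula, and the identification of the multiples' contribution with $-\tfrac12\sum_{j=1}^{N-1}(-1)^{j+\lfloor 2pj/N\rfloor}$ are all sound. Your decomposition (multiples of $p$ versus non-multiples) is also genuinely different from the paper's, which writes $k=m(4p^2-1)+ip^2+lp+s$, shows the two floors have different parities exactly when $\tfrac{t}{p}+\tfrac{2(ip^2+lp+s)}{4p^2-1}\in[1,2)$, and then pairs such $k$ off under the shift $k\mapsto k+p^2 \pmod{4p^3-p}$, the single unpaired index $k=4p^3-p-p^2$ producing the $\pm 2$. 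But your argument stops exactly where the theorem's content begins: both steps you yourself flag as ``the main obstacle'' are asserted, not proved, and neither is routine. The cancellation of the non-multiples is the heart of the matter. For a fixed residue $r\not\equiv 0\pmod p$, the indices $k\equiv r\pmod p$ with $\Delta_k$ odd form $p$ runs of consecutive terms of that progression, each of length $2p$ or $2p-1$; an even run cancels internally, but every odd run contributes $\pm 1$, so the vanishing you need is a quantitative statement about run lengths and the parities of their left endpoints. Naming the bijection $k\leftrightarrow(k\bmod p,\ k\bmod N)$ supplies no mechanism for this: you never say what is to be summed over the CRT grid or where the special form of $N$ enters, and, as you concede, the reflection $k\mapsto pN-k$ only shows the non-multiple sum is even. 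So nothing in the proposal actually controls this term, which is precisely the part of the count that the paper spends its whole proof on.

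The multiples identity is likewise unproved, and your sketch of it contains a slip: $v=\lfloor 2pj/N\rfloor$ ranges over $\{0,\dots,2p-1\}$, not $\{0,\dots,p-1\}$. That part, at least, is easy to finish: the level set $\{j:\lfloor 2pj/N\rfloor=v\}$ equals $\{2pv,\dots,2pv+2p-1\}$ for $0\le v\le 2p-2$, contributing $0$ to $\sum_j(-1)^{j+v}$, while the last level set consists of the $2p-1$ integers $\{4p^2-2p,\dots,4p^2-2\}$, beginning and ending at even numbers, hence contributes $(-1)^{2p-1}\cdot 1=-1$; dropping the $j=0$ term gives $\sum_{j=1}^{N-1}(-1)^{j+\lfloor 2pj/N\rfloor}=-2$ as you want. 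The non-multiple cancellation is also true and completable --- for instance, writing $(-1)^{\lfloor 2pa/N\rfloor}=(-1)^{2pa\bmod N}$ and using that $a\mapsto 2pa\bmod N$ is an involution of $\mathbb{Z}/N$, the sum over each residue class $r$ reduces to a partial block sum $\sum_{b=0}^{M_r}(-1)^{b+\lfloor 2pb/N\rfloor}$ whose endpoint $M_r$ sits at an odd offset inside its block, forcing it to vanish --- but no argument of this kind (or any other) appears in your proposal. Verdict: correct skeleton and true intermediate claims, but the two steps carrying essentially all of the difficulty are missing, so this is not yet a proof.
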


\begin{proof}
We prove only $d(b(4p^3 - p, (4p^2 -1) q +  2p ) ) = d(b(p,q)) + 2$.
The other case, $d(b(4p^3 - p, (4p^2 -1) q -  2p ) ) = d(b(p,q)) - 2$,
is also obtained by the same way.

To do this, we first count the number of integer $k$ satisfying
$0< k < 4 p^3\! -\! p$ and
\begin{equation}
\label{Eqn:ineq-1}
(-1)^{\left[ \frac{q}{p}  k \right] } = -1
\text{ and }
(-1)^{\left[ \frac{(4p^2 -1) q + 2p}{4p^3 -p}  k \right] } = +1,
\end{equation}
or
\begin{equation}
\label{Eqn:ineq-2}
(-1)^{\left[ \frac{q}{p}  k \right] } = +1
\text{ and }
(-1)^{\left[ \frac{(4p^2 -1) q + 2p}{4p^3 -p}  k \right] } = -1.
\end{equation}
Let us divide a set $\{ k \ | \ 0 \le k < 4p^3 - p \}$ into four subsets $A_0, A_1, A_2$ and $A_3$ as follows: For each $0 \le i \le 3$, we define
\begin{equation*}
\label{Eqn:k}
A_i = \{ k \ |\ 0 \le k < 4p^3 -p \text{ satisfying }
k = m (4p^2-1) + ip^2 + lp + s
\text{ for } 0 \le m, l, s, \le p-1 \}
\end{equation*}
Then we have
\[
A_0 \cup A_1 \cup A_2 \cup A_3 = \{ k \ | \ 0 \le k < 4p^3 - p \}
 \ \text{ and }
\]
\[
A_0 \cap A_3 = \{ k \ | \   1 \le k < 4p^3 -p \ \textrm{ and }\
k \equiv 0 \! \! \pmod{\! (4p^2 -1)}  \}.
\]
Observe that, for each $0 \le i \le 3$,
\begin{equation}
\label{Eqn:n}
\frac{q}{p} \cdot (m(4p^2-1) + ip^2 + lp + s) = (4m + i)pq + lq + 
\frac{q \cdot (s-m)}{p} \ \ {\textrm{ and }}
\end{equation}
\begin{align}
& \frac{(4p^2 -1) q + 2p}{4p^3 -p} \cdot
\left(m(4p^2-1) + ip^2 + lp + s \right)  \label{Eqn:n+1} \\
& = (4m + i)pq + lq + \frac{q \cdot (s-m)}{p} + 2m  
+ \frac{2(ip^2 + lp +s)}{4p^2 -1}. \notag
\end{align}
Since there are integers $M$ and $0 \le t \le p\!-\!1$, determined uniquely by $m$ and $s$, satisfying
$q \cdot (s-m) = p \cdot M + t$, Equations (\ref{Eqn:n}) and (\ref{Eqn:n+1}) are modified as follows:
\begin{equation}
\label{Eqn:n'}
\frac{q}{p} \cdot (m(4p^2-1) + ip^2 + lp + s)  = (4m + i)pq + lq +  M  + \frac{t}{p}
\end{equation}
\begin{align}
& \frac{(4p^2 -1) q + 2p}{4p^3 -p} \cdot
\left(m(4p^2-1) + ip^2 + lp + s \right)  \label{Eqn:n+1'} \\
& = (4m + i)pq + lq + M + 2m +  \frac{t}{p} + \frac{2(ip^2 + lp +s)}{4p^2 -1}. \notag
\end{align}
Hence, for each $0 \le i \le 3$ and $0 \le t, l, s \le p-1$, we get
\[
0 \le \frac{t}{p} + \frac{2(ip^2 + lp +s)}{4p^2 -1} < 3.
\]
So, by comparing the integer parts of (\ref{Eqn:n'}) and (\ref{Eqn:n+1'}),
only when
\begin{equation}
\label{Eqn:change}
1 \le  \frac{t}{p} + \frac{2(ip^2 + lp +s)}{4p^2 -1} < 2,
\end{equation}
the corresponding integer $k = m(4p^2-1) + ip^2 + lp + s$ satisfies either (\ref{Eqn:ineq-1}) or (\ref{Eqn:ineq-2}). Otherwise, the integer $k$ satisfies
\begin{equation}
\label{Eqn:ineq-1-1}
(-1)^{\left[ \frac{q}{p}  k \right] } = 1 =
(-1)^{\left[ \frac{(4p^2 -1) q + 2p}{4p^3 -p}  k \right] }
\ {\textrm{ or }} \
(-1)^{\left[ \frac{q}{p}  k \right] } = -1 =
(-1)^{\left[ \frac{(4p^2 -1) q + 2p}{4p^3 -p}  k \right] }.
\end{equation}

Next we consider a map
\begin{equation}
\label{Eqn:map}
\phi : A_0 \cup A_1 \cup  A_2 \cup A_3  \to  A_0 \cup A_1 \cup A_2 \cup  A_3
\end{equation}
defined by
\[
\phi(k)  \equiv k + p^2 \pmod{ 4p^3 - p}.
\]
Then $\phi$ is a one-to-one map which satisfies following  properties:
\begin{itemize}
\item If $k \in A_0$ satisfies (\ref{Eqn:change}), then $\phi(k) \in A_1$ also satisfies (\ref{Eqn:change}) because
\[\ \ \ \ \ \
1 \le  \frac{t}{p} + \frac{2( lp +s)}{4p^2 -1} < \frac{t}{p} + \frac{2(p^2 +  lp +s)}{4p^2 -1} \le \frac{8p^3 -4p^2 -3p + 1}{p \cdot (4p^2 -1)} < 2
 \]
 for any $0 \le t, l, s \le p-1$.

Note that, if $k \in A_0$ satisfies (\ref{Eqn:ineq-1}), then $\phi(k) \in A_1$ satisfies  (\ref{Eqn:ineq-2}) and vice versa,  if $k \in A_0$ satisfies (\ref{Eqn:ineq-2}), then $\phi(k) \in A_1$ satisfies  (\ref{Eqn:ineq-1}).

\item If  $k \in A_i$ ($i \! =\! 1, 2$) satisfies (\ref{Eqn:change}) and  $k \ne 4p^3 - p -p^2$, then exactly one of $\phi(k) = k + p^2$ or $\phi^{-1}(k) = k-p^2$ satisfies (\ref{Eqn:change}).
We can prove this as follows: First note that
at most one of  $\phi(k) \in A_{i+1}$ or $\phi^{-1}(k) \in A_{i-1}$ satisfies (\ref{Eqn:change}).
Suppose that none of $\phi(k)$ and $\phi^{-1}(k)$ satisfies (\ref{Eqn:change}). Then, for each $j=0, 2$, we have
\begin{equation*}
  j \le  \frac{t}{p} + \frac{2((i+j-1) p^2 + lp +s)}{4p^2 -1} < j +1 .
\end{equation*}
It implies that
\begin{equation} \label{Eqn:A1}
(6-2i)p^3 -2p \le (4p^2 -1)t + 2p(lp + s) < (6-2i)p^3 -p
\end{equation}
and it is equivalent to
\begin{equation} \label{Eqn:A1-1}
 \! \! \! (3-i)p- \frac{2p \cdot (1+s)-t}{2p^2} \le 2t+l < (3-i)p- 
 \frac{p \cdot (1 + 2s)-t}{2p^2}.
\end{equation}
Note that, since $t, l$ and $s$ are all integers satisfying $0 \le t, l, s \le p-1$ ,
\[
0 < \frac{2p \cdot (1+s)-t}{2p^2} \le 1 \ {\textrm{and}} \
0 < \frac{p \cdot (1 + 2s)-t}{2p^2} < 1.
\]
Hence the inequality (\ref{Eqn:A1-1}) makes sense only when $ \frac{2p \cdot (1+s)-t}{2p^2} = 1$, which is the case $s= p-1$ and $t=0$.
Furthermore, in this case we should have $2t+l = (3-i)p-1$. Again it is true only when $l=p-1$, $i=2$, and $m=s$, which is the case $k= 4p^3 - p -p^2$. Therefore we get a contradiction.

\item If $k = 4p^3 - p -p^2 \in A_2$, then  $k$ satisfies (\ref{Eqn:ineq-2}) but neither $\phi(k)$ nor $\phi^{-1}(k)$ satisfies (\ref{Eqn:change}).

\item If $k\in A_3$ satisfies (\ref{Eqn:change}), then $\phi^{-1}(k)$ also satisfies (\ref{Eqn:change}) because
\[
1 \le \frac{t}{p}+\frac{2(2p^2+lp+s)}{4p^2-1}<
\frac{t}{p}+\frac{2(3 p^2+lp +s)}{4p^2-1} <2.
\]
\end{itemize}

Therefore, by collecting the properties of $\phi$ above, we conclude that the number of integers $k$ satisfying (\ref{Eqn:ineq-2}) in $\{ k \, |\, 0 < k < 4p^3 -p \}$ is exactly one more than those satisfying (\ref{Eqn:ineq-1}).
Finally, by combining Equations~\ref{Eqn:ineq-1}, ~\ref{Eqn:ineq-2}, ~\ref{Eqn:ineq-1-1} and the fact that
\begin{align*}
\sum_{k=1}^{4p^3 -p-1} (-1)^{\left[ \frac{q}{p} k \right] } & =
\sum_{k=1}^{p-1} (-1)^{\left[ \frac{q}{p} k \right] } +  \sum_{w=1}^{2p^2 -1} \sum_{k=0}^{2p-1} (-1)^{\left[ \frac{q}{p} (2pw-p + k)\right] }   \\
& = \sum_{k=1}^{p-1} (-1)^{\left[ \frac{q}{p} k \right] } = -d(b(p,q)),
\end{align*}
we get
\[
d(b(4p^3 - p, (4p^2 -1) q +  2p ) ) = d(b(p,q)) + 2.
\]

\end{proof}

\smallskip

\begin{cor}
\label{Lemma:diag-Torus}
For each integer $n>0$,
\[
d(K_n(\pm 1)) = d(K_n) \pm 2 =  2 n \pm 2.
\]
\end{cor}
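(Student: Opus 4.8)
The plan is to deduce this entirely from Theorem~\ref{Thm:diagonal} together with two elementary base-case evaluations of the defining sum for $d$, once the correct representatives $b(p,q)$ have been chosen. First I would pin down $d(K_n)$ directly from the definition. By the computation in the proof of Lemma~\ref{Lemma:const-1} we have $K_n = b(2n+1, -1)$, and $q=-1$ is odd with $-p < -1 < p$, so it is an admissible representative for the diagonal element. For $1 \le k \le 2n$ one has $\tfrac{-1}{2n+1}k \in (-1,0)$, hence $\left[\tfrac{-1}{2n+1}k\right] = -1$, and therefore
\[
d(K_n) = -\sum_{k=1}^{2n}(-1)^{\left[\frac{-1}{2n+1}k\right]} = -\sum_{k=1}^{2n}(-1)^{-1} = 2n .
\]
The same one-line computation gives $d(b(2n+1,1)) = -2n$, since now every floor equals $0$.

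Next I would put $K_n(\pm 1)$ into the shape appearing on the left-hand side of (\ref{Eqn:diagonal}). Setting $p=2n+1$, one checks $4p^2-1 = (4n+1)(4n+3)$ and $4p^3-p = (2n+1)(4n+1)(4n+3)$, so the second identity of Lemma~\ref{Lemma:const-1} reads
\[
K_n(\pm 1) = b\bigl(4p^3 - p,\ (4p^2-1)(-1) \pm 2p\bigr),
\]
which is exactly $b(4p^3-p, (4p^2-1)q \pm 2p)$ with $q=-1$. The number $(4p^2-1)(-1)\pm 2p$ is odd and lies in $(-(4p^3-p),0)$, so it is again an admissible representative. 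I would then like to apply Theorem~\ref{Thm:diagonal}, but its hypothesis is $0<q<p$, whereas here $q=-1$; this mismatch of sign is the one genuine obstacle.

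To get around it I would record the sign-reversal symmetry $d(b(P,Q)) = -\,d(b(P,-Q))$. This is immediate from the defining sum: for $\gcd(Q,P)=1$ and $1 \le k \le P-1$ the number $\tfrac{Q}{P}k$ is never an integer, so $\left[\tfrac{-Q}{P}k\right] = -\left[\tfrac{Q}{P}k\right]-1$ and each summand changes sign. Applying this with $P = 4p^3-p$ and $Q = (4p^2-1)(-1)\pm 2p$ turns the problem into computing $d\bigl(b(4p^3-p, (4p^2-1)\cdot 1 \mp 2p)\bigr)$, where now $q=1$ satisfies $0<q<p$. Theorem~\ref{Thm:diagonal} then gives $d\bigl(b(4p^3-p,(4p^2-1)\mp 2p)\bigr) = d(b(p,1)) \mp 2 = -2n \mp 2$, and undoing the sign flip yields $d(K_n(\pm 1)) = 2n \pm 2$. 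Matching this against $d(K_n)=2n$ from the first step completes the proof. Beyond the sign issue, the only things to watch are that the chosen values of $q$ are odd and lie within the admissible window $(-P,P)$, and that the two occurrences of $\pm$ and $\mp$ are tracked consistently through the symmetry.
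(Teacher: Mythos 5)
Your proposal is correct and is essentially the paper's own argument: the paper likewise starts from Lemma~\ref{Lemma:const-1}, passes to the mirror image $K_n(\pm 1)^*$ (which is exactly your replacement of $q=-1$ by $q=+1$, with $d$ changing sign), and applies Theorem~\ref{Thm:diagonal} with $p=2n+1$, $q=1$ before undoing the sign flip. Your only additions --- the explicit floor-function proof of $d(b(P,-Q))=-d(b(P,Q))$ and the direct evaluations $d(K_n)=2n$ and $d(b(2n+1,1))=-2n$ --- are steps the paper uses but leaves implicit.
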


\begin{proof}
In Lemma~\ref{Lemma:const-1} above, we get
\begin{eqnarray*}
K_n(\pm 1) &=& b((2n+1)(4n+1)(4n+3), 2n(4n+1)(4n+3)  \pm 2 (2n+1)) \\
&=& b( (2n+1)(4n+1)(4n+3),  -(4n+1)(4n+3) \pm 2 (2n+1) ).
\end{eqnarray*}

Let us consider the mirror image $K_n(\pm 1)^*$ of $K_n(\pm 1)$. 
Then it is a $2$-bridge knot of the type
$b( (2n+1)(4n+1)(4n+3),  (4n+1)(4n+3) \mp 2 (2n+1) ).$
Then, by choosing $p = 2n + 1$ and $q= 1$, we get
\begin{align*}
(2n+1)(4n+1)(4n+3) & = 4p^3 -p \ {\textrm{ and }} \\
(4n+1)(4n+3) \mp 2 (2n+1) &= (4p^2 -1)q \mp 2p.
\end{align*}
So it  satisfies all conditions of Theorem~\ref{Thm:diagonal} above
and therefore we get
\[
d(K_n(\pm 1)^*) = d(b(2n+1, 1)) \mp 2 \ {\textrm{and}}
\]
\[
d(K_n(\pm 1)) = - d(k_n(\pm 1)^*) = - d(b(2n + 1, 1))  \pm 2 = 2n \pm 2.
\]
\end{proof}

\begin{cor}
\label{Lemma:diag-ni}
For each $\varepsilon_j \in \{0, 1\}$ with $0 \le j \le n$, we have
\[
d(K(n+1, \sum_{j=0}^{n} \varepsilon_j 2^{j})) = d(K(n, \sum_{j=0}^{n-1} \varepsilon_j 2^{j}))  + 2 (-1)^{\varepsilon_n +1} \ {\textrm{and}}
\]
\[
| \{ |d(K(n, i)) | \  | \  i = 0, 1, 2, \cdots, 2^n -1\} | = \left[\frac{n+1}{2}\right]  + 1.
\]
\end{cor}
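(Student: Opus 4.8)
The statement has two parts, and the plan is to obtain the one-step recursion from Theorem~\ref{Thm:diagonal} and then finish the count by unwinding it. The starting point is the observation that the pair $(p(n+1),q(n+1,i))$ produced in Lemma~\ref{Lemma:Const-2} has exactly the shape $(4p^3-p,\,(4p^2-1)q\pm 2p)$ appearing in Theorem~\ref{Thm:diagonal}, with $p=p(n)$ and $\pm=(-1)^{\varepsilon_n+1}$. The only obstruction to applying Theorem~\ref{Thm:diagonal} verbatim is that it requires $q$ odd with $0<q<p$, while $q(n,i)$ is even. I would therefore first replace $q(n,i)$ by the odd integer $q'(n,i)$ of Lemma~\ref{Lemma:q'}: since $q(n,i)q'(n,i)\equiv 1\pmod{p(n)}$, the two represent the same $2$-bridge knot by Lemma~\ref{Lemma:2-bridge}(a), so $d(K(n,i))=d(b(p(n),q'(n,i)))$, and the recursion $q'(n+1,i)=(4p(n)^2-1)q'(n,i_n)+2(-1)^{\varepsilon_n+1}p(n)$ together with $p(n+1)=4p(n)^3-p(n)$ is again exactly of the required shape (here $i_n=\sum_{j=0}^{n-1}\varepsilon_j2^j$).

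Next I would verify the hypotheses of Theorem~\ref{Thm:diagonal} by induction on $n$: that $p(n)$ is odd with $p(n)\ge 3$; that $q'(n,i)$ is odd (the base value is $q'(0,0)=-1$, and in the recursion an odd$\,\times\,$odd term is increased by an even multiple of $p(n)$); that $\gcd(p(n),q'(n,i))=1$ (immediate from Lemma~\ref{Lemma:q'}); and that $|q'(n,i)|<p(n)$ (a short computation shows this bound is preserved by the recursion). When $q'(n,i_n)>0$, Theorem~\ref{Thm:diagonal} then yields $d(K(n+1,i))=d(K(n,i_n))+2(-1)^{\varepsilon_n+1}$ directly. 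When $q'(n,i_n)<0$ I would pass to the mirror image and use $d(b(p,-q))=-d(b(p,q))$ (equivalently $d(K^{*})=-d(K)$, as in the proof of Corollary~\ref{Lemma:diag-Torus}): one sign flip comes from replacing $q'$ by $|q'|$ in the source and a second from the mirror of the target, and the two compose so that the net change is once more $2(-1)^{\varepsilon_n+1}$. Keeping this sign bookkeeping honest is the one genuinely delicate step, and I expect it to be the main obstacle.

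With the recursion in hand, the count is elementary. Telescoping from level $0$ gives, for $i=\sum_{j=0}^{n-1}\varepsilon_j2^j$, the formula $d(K(n,i))=d(K(0,0))+2\sum_{j=0}^{n-1}(-1)^{\varepsilon_j+1}=d(K(0,0))+2(2w-n)$, where $w=\sum_{j=0}^{n-1}\varepsilon_j$ is the number of nonzero digits of $i$. Since $K(0,0)=D(1,1)=K_1$, Corollary~\ref{Lemma:diag-Torus} gives $d(K(0,0))=2$, hence $d(K(n,i))=2(2w-n+1)$. As $i$ ranges over $0,1,\dots,2^n-1$ the weight $w$ attains every value in $\{0,1,\dots,n\}$, so the diagonal elements form the arithmetic progression $\{\,2(2w-n+1):0\le w\le n\,\}$ of $n+1$ terms with common difference $4$.

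Finally I would count the distinct absolute values by splitting on the parity of $n$. The integers $2w-n+1$ form an arithmetic progression of step $2$ centered at $1$ and running from $-(n-1)$ to $n+1$. If $n$ is even these are all odd, so the absolute values are exactly $\{1,3,\dots,n+1\}$, which has $\frac n2+1$ elements; if $n$ is odd these are all even and include $0$, so the absolute values are $\{0,2,\dots,n+1\}$, which has $\frac{n+1}{2}+1$ elements. In either case this equals $[\frac{n+1}{2}]+1$, which is the asserted count.
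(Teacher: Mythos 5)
Your proposal is correct and follows essentially the same route as the paper's proof: replace $q(n,i)$ by the inverse representative $q'(n,i)$ from Lemma~\ref{Lemma:q'}, pass to the mirror image (with $d(K^*)=-d(K)$) so that Theorem~\ref{Thm:diagonal} applies to the positive odd value $-q'(n,i)$, obtain the recursion $d(K(n+1,i))=d(K(n,i_n))+2(-1)^{\varepsilon_n+1}$, and telescope from $d(K(0,0))=2$ to get the values $\{2-2n+4w\}$ and the count $\left[\frac{n+1}{2}\right]+1$. Your explicit verification of the hypotheses of Theorem~\ref{Thm:diagonal} (oddness, coprimality, $|q'(n,i)|<p(n)$) and the parity analysis in the final count are details the paper leaves implicit, but the argument is the same; note only that the case $q'(n,i_n)>0$ you allow for never actually occurs, since the recursion forces $q'(n,i)<0$ for all $n$ and $i$.
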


\begin{proof}
In Lemma~\ref{Lemma:Const-2} and Lemma~\ref{Lemma:q'} above, we get
\[
K(n, i) = b(p(n), q'(n, i))
\]
as a $2$-bridge knot, where
\begin{align}
p(0) = 3, \ \ q(0,0) & = -1 \ {\textrm{and}} \ \
p(n+1) = p(n) \cdot \{4 p(n)^2-1\}, \label{Eqn:recursion-p}\\
q'(n+1, \sum_{j=0}^{n} \varepsilon_j 2^{j}) &= q'(n,  \sum_{j=0}^{n-1} \varepsilon_j 2^{j}) \cdot \{4p(n)^2 -1\} + 2(-1)^{\varepsilon_n + 1} p(n) \label{Eqn:recursion-q}
\end{align}
for all $n \ge 0$ with a convention that $0 \le i = \sum_{j=0}^{n-1} \varepsilon_j 2^{j} < 2^n$.

Since $q'(n,i) <0$ for each $n\ge 0$ and $0 \le i < 2^n$, for simplicity of a computation, we consider
$K(n, i)^* = b(p(n), - q'(n, i))$, the mirror image of $K(n, i)$.
Let $\bar{q}(n, i) = - q'(n, i)$, so that $\bar{q}(n,i)$ is a positive odd integer and
\[
d(K(n+1,i)) =- \sum_{k=1}^{p(n+1)-1} (-1)^{\left[\frac{q'(n+1,i)}{p(n+1)} k\right]} = \sum_{k=1}^{p(n+1)-1} (-1)^{\left[\frac{\bar{q}(n+1,i)}{p(n+1)} k\right]}.
\]
Since $p(n+1) = p(n) \cdot \{4p(n)^2-1\}$ and
\begin{equation*}
\bar{q}(n+1,  \sum_{j=0}^{n} \varepsilon_j 2^{j})
=\bar{q}(n,  \sum_{j=0}^{n-1} \varepsilon_j 2^{j}) \cdot \{4p(n)^2 -1\}  + 2(-1)^{\varepsilon_n}p(n),
\end{equation*}
we can apply Theorem~\ref{Thm:diagonal} to get
\begin{align*}
d(K(n+1, i)) & = -d(K(n+1, i)^*) = -\{d(K(n, i)^*) + (-1)^{\varepsilon_n} 2\} \\
&= d(K(n,i)) + (-1)^{\varepsilon_n + 1} 2.
\end{align*}
Hence $d(K(0,0)) = d(b(3, -1)) =2$ implies that
\begin{eqnarray*}
 & & \{ d(K(n,i)) \ | \ i= 0, 1, 2, \cdots 2^n -1 \} = \{ 2-2n + 4j \ | \ j=0, 1, \cdots, n \} \ \ {\textrm{and}}  \\
 & & | \{ |d(K(n, i)) | \  | \  i = 0, 1, 2, \cdots, 2^n -1 \} |  = \left[\frac{n+1}{2}\right]  + 1.
\end{eqnarray*}

\end{proof}

\subsection{A multivariable Alexander polynomial computation}

Suppose that $L = L_1 \cup L_2 \cup \cdots \cup L_p$ is an ordered oriented link with $p$ components. Let
\[
\Delta_L ( t_1, t_2, \cdots, t_p) \in \mathbb{Z}[t_1^{\pm 1}, t_2^{\pm 1}, \cdots, t_p^{\pm 1}]
\]
be its multivariable Alexander polynomial and
\[
\nabla_L ( t_1, t_2, \cdots, t_p) \in \mathbb{Z}(t_1^{\pm 1}, t_2^{\pm 1}, \cdots, t_p^{\pm 1})
\]
be its Conway potential function. Then it is known to Hartley~\cite{Hartley:1983} that
\begin{equation}
\nabla_L(t_1, t_2, \cdots, t_p) = \begin{cases}
\frac{\Delta_L(t_1^2)}{t_1 - t_1^{-1}}, & \text{ if } p = 1 \\
\Delta_L(t_1^2, t_2^2, \cdots, t_p^2), & \text{ if } p \ge 2
\end{cases}
\end{equation}
if the Alexander polynomial is symmetrized.
The reduced potential function, defined by
$\overline{\nabla}(t) = (t - t^{-1}) \nabla(t, t, \cdots, t)$,
satisfies the skein relation
$\overline{\nabla}_+ (t) = \overline{\nabla}_-(t) + (t - t^{-1}) \overline{\nabla}_0(t).$
He also defined
\[
H(t) = \frac{\overline{\nabla}_L(t)}{(t-t^{-1})^{p-1}} = \frac{\nabla_L(t, t, \cdots, t)}{(t-t^{-1})^{p-2}}
\]
which is actually the Hosokawa polynomial as in ~\cite{Hosokawa:58} and it satisfies a relation:
\[
H_+(t) = H_-(t) + (t - t^{-1})^2 H_0(t).
\]

\smallskip

\begin{lem}[Torres Condition ~\cite{Hartley:1983} ]
\label{Lemma:Torres}
 Suppose that $L = L_1 \cup L_2 \cup \cdots \cup L_p$ is a link with $p$ components in $S^3$. Then
\begin{multline} \notag
\nabla_L(t_1, t_2, \cdots , t_{i-1}, 1, t_{i+1},  \cdots, t_p)   \\  = \left( \prod_{j \ne i, j=1}^p t_j^{\ell_{ij}} - \prod_{j \ne i, j=1}^p t_j^{-\ell_{ij}} \right) \nabla_{L\setminus L_i}( t_1, t_2, \cdots , t_{i-1},  t_{i+1},  \cdots, t_p).
\end{multline}
\end{lem}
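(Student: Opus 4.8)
The plan is to establish this as Hartley's refinement of the classical Torres condition, whose natural home is Reidemeister--Milnor torsion. First I would recall that, for a link $L$ with $p \ge 2$ components, the Conway potential function $\nabla_L$ coincides, in its symmetric normalization, with the Milnor torsion $\tau(X_L)$ of the exterior $X_L = S^3 \setminus N(L)$, the coefficient system being the abelianization $H_1(X_L) \to \mathbb{Z}^p$ that sends the $j$-th meridian $m_j$ to the variable $t_j$. The identity $\nabla_L(t_1,\dots,t_p) = \Delta_L(t_1^2,\dots,t_p^2)$ quoted above is precisely the statement that the Conway variables are square roots of the Alexander variables, and this square-root convention is the bookkeeping that will ultimately make the Torres factor symmetric and unit-free.

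The geometric input is that deleting the component $L_i$ is the trivial Dehn filling of the $i$-th boundary torus: writing $L' = L \setminus L_i$, one has
\[
X_{L'} = X_L \cup_{T_i} V_i,
\]
where $V_i \cong S^1 \times D^2$ is the solid torus reglued along $T_i = \partial N(L_i)$. Under this filling the meridian $m_i$ bounds a meridian disk of $V_i$, hence dies in $H_1(X_{L'})$, and the induced change of coefficients is exactly the substitution $t_i = 1$. Thus evaluating $\nabla_L$ at $t_i = 1$ computes the torsion of $X_L$ with the coefficient system pulled back from $X_{L'}$.

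Next I would apply Milnor's multiplicativity (the Mayer--Vietoris formula for torsion, valid here because all relevant complexes are acyclic) to the decomposition above, obtaining
\[
\tau(X_{L'}) = \tau(X_L)\big|_{t_i = 1}\cdot \tau(V_i)\cdot \tau(T_i)^{-1}.
\]
The two elementary pieces are computed directly: the torus $T_i$ is torsion-acyclic with $\tau(T_i)=1$ for the nontrivial coefficient system, while the solid torus $V_i \simeq S^1$ has torsion $(S-1)^{-1}$, where $S$ is the image of its core in the coefficient group. Since that core is the longitude $\ell_i$ of $L_i$, and in $H_1(X_{L'})$ one has $\ell_i = \sum_{j\ne i}\ell_{ij}\,m_j$ with $\ell_{ij} = \text{lk}(L_i,L_j)$, the core carries $S = \prod_{j\ne i} T_j^{\ell_{ij}}$, where $T_j = t_j^2$ is the Alexander variable of $m_j$. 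Rearranging yields $\tau(X_L)|_{t_i=1} = (S-1)\,\tau(X_{L'})$, which is the classical Torres factor $\prod_{j\ne i} T_j^{\ell_{ij}} - 1$.

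The final step, and the one I expect to be the main obstacle, is to upgrade this asymmetric, only-up-to-units identity to the symmetric statement of the lemma. Passing from the raw Milnor torsion to the symmetric Conway representative replaces the factor $S-1$ by $S^{1/2}-S^{-1/2}$; recalling $T_j = t_j^2$ one has $S^{1/2} = \prod_{j\ne i} t_j^{\ell_{ij}}$, so the factor becomes exactly $\prod_{j\ne i} t_j^{\ell_{ij}} - \prod_{j\ne i} t_j^{-\ell_{ij}}$, the coefficient claimed. The delicate point is to fix the symmetric representative of the torsion consistently on both sides of the gluing and to check that the half-integer exponents introduced by the square-root convention are absorbed precisely by the linking numbers $\ell_{ij}$, leaving no residual sign or unit. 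An entirely parallel computation via Fox calculus on a Wirtinger presentation---specializing the Alexander matrix at $t_i=1$ and identifying the quotient module with that of $L'$---reaches the same factor and serves as a check, but the torsion route, following \cite{Hartley:1983}, makes the multiplicative structure and the appearance of the longitudinal class $\sum_{j\ne i}\ell_{ij}m_j$ transparent.
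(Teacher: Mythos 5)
The paper offers no proof of this lemma at all: it is quoted from Hartley's paper \cite{Hartley:1983}, so there is no internal argument to compare yours against; what can be compared is your route versus Hartley's. Your torsion-theoretic proof is a recognized and genuinely different path: it is essentially Turaev's proof of the Torres formula, via multiplicativity of Reidemeister--Milnor torsion under the Dehn-filling decomposition $X_{L'} = X_L \cup_{T_i} V_i$, whereas Hartley constructs the potential function from a carefully normalized Alexander matrix and verifies the Torres condition by direct Fox-calculus manipulation. Your approach buys conceptual clarity -- the factor $\prod_{j\ne i} t_j^{\ell_{ij}} - \prod_{j \ne i} t_j^{-\ell_{ij}}$ visibly arises as the torsion contribution of the filling solid torus, whose core represents $\sum_{j \ne i} \ell_{ij} m_j$ -- and it uniformly explains why the knot case ($p-1=1$) carries the extra denominator $t-t^{-1}$, since the torsion of a knot exterior is $\Delta/(T-1)$.

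There are, however, two genuine gaps. First, and decisively: everything up to your final paragraph only recovers the classical Torres condition (Torres, 1953), i.e., the identity up to multiplication by units $\pm t^a$, because unrefined torsion is only defined up to such units. The entire content of Hartley's version -- the reason this lemma is attributed to him rather than to Torres, and the reason the paper can use it to compare potential functions on the nose in Lemma~\ref{Lemma:Main} -- is that in the symmetric Conway normalization the equality holds exactly, with no residual sign or unit. That is precisely the step you label ``the delicate point'' and assert rather than prove; to complete it along your lines one needs Turaev's sign-refined torsion (Euler structures), his theorem identifying $\nabla_L$ with that refined torsion (itself a substantial result postdating Hartley), and control of the refinement under gluing. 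Second, the multiplicativity argument requires acyclicity, which fails exactly when $\ell_{ij}=0$ for all $j \ne i$: then the coefficient system restricted to $T_i$ and $V_i$ is trivial, $\tau(T_i)$ and $\tau(V_i)=(S-1)^{-1}$ are not defined as you use them (since $S=1$), and the gluing formula collapses; in that case one must argue separately that both sides vanish (the right-hand side is visibly $0$ since the Torres factor is $1-1$). A similar caveat applies when $\nabla_{L\setminus L_i}=0$, e.g. for split sublinks, and when one passes from the torsion of $X_L$ to its specialization at $t_i=1$, which requires a base-change justification.
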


For an oriented link $L = L_1 \cup L_2 \cup \cdots \cup L_p$, one can define the corresponding graph $G$ such that each vertex is related to a link component and an edge between two vertices $v_i$ and $v_j$ with weight $\ell_{ij}$ if  $\mathrm{lk}(L_i, L_j) = \ell_{ij}$.  The graph $G$ is so called the adjacent graph of the linking matrix $\left( \ell_{ij} \right)$.

\begin{lem}[\cite{Hartley:1983}, \cite{Hoste:85}]
\label{Lemma:Hoste}
 Suppose that $L$ is an oriented link with $p$ components in $S^3$. Then
\[
H_L(1) = \mathcal{L}_{ij} = (-1)^{p-1} \sum_{g\in \mathcal{T} }\overline{g},
\]
where $\mathcal{L}_{ij}: = (-1)^{i+j} \mathrm{det} (\mathcal{L}^{ij})$  and $\mathcal{L}^{ij}$ is the $(i,j)$-minor matrix of the linking matrix $\mathcal{L}$ and $\mathcal{T}$ is the set of all trees consisting of $(p-1)$ edges and $\overline{g}$ is the product of $(p-1)$ linking numbers corresponding to the tree $g$.
\end{lem}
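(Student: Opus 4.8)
The statement bundles two separate assertions: the topological identity $H_L(1)=\mathcal{L}_{ij}$ and the combinatorial identity $\mathcal{L}_{ij}=(-1)^{p-1}\sum_{g\in\mathcal{T}}\bar g$. I would treat them independently, since the second is pure linear algebra while the first carries all of the topological content, and then splice them together at the end.

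For the combinatorial identity, the plan is to recognize $\mathcal{L}$ as a signed graph Laplacian. Since $\mathrm{lk}(L_i,L_j)=\mathrm{lk}(L_j,L_i)$ and $\ell_{ii}=-\sum_{j\neq i}\ell_{ij}$, the matrix $\mathcal{L}$ has vanishing row and column sums; equivalently $-\mathcal{L}=D-A$ is the weighted Laplacian $\Lambda$ of the adjacency graph $G$, with $A=(\ell_{ij})_{i\neq j}$ and $D$ the diagonal matrix of weighted degrees. A standard computation (using $\mathcal{L}\,\mathrm{adj}(\mathcal{L})=\det(\mathcal{L})I=0$ together with the zero row and column sums) shows $\mathrm{adj}(\mathcal{L})$ is a scalar multiple of the all-ones matrix, so every signed cofactor $\mathcal{L}_{ij}=(-1)^{i+j}\det\mathcal{L}^{ij}$ is the same number; this is what justifies the index-free notation $\mathcal{L}_{ij}$. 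The weighted Matrix-Tree theorem (Kirchhoff) then identifies the common cofactor of $\Lambda$ with $\sum_{g\in\mathcal{T}}\bar g$, and since $\Lambda^{ij}=-(\mathcal{L}^{ij})$ is a $(p-1)\times(p-1)$ matrix we have $\det\Lambda^{ij}=(-1)^{p-1}\det\mathcal{L}^{ij}$; the sign $(-1)^{p-1}$ therefore drops out exactly as stated.

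For the topological identity I would induct on $p$. The base case $p=1$ is immediate: $H_K(1)=\Delta_K(1)=1$, matching the empty product over the unique tree with no edges. For $p\ge 2$, note that $H_L(1)=\lim_{t\to 1}\nabla_L(t,\dots,t)/(t-t^{-1})^{p-2}$, so $H_L(1)$ is the leading coefficient of the order-$(p-2)$ zero of $\nabla_L(t,\dots,t)$ at $t=1$. The natural tool is the Torres condition (Lemma~\ref{Lemma:Torres}): setting $t_i=1$ factors $\nabla_L$ as $\big(\prod_{j\neq i}t_j^{\ell_{ij}}-\prod_{j\neq i}t_j^{-\ell_{ij}}\big)\,\nabla_{L\setminus L_i}$, and specializing the remaining variables to $t$ turns the first factor into $t^{d_i}-t^{-d_i}$ with $d_i=-\ell_{ii}$, whose ratio to $t-t^{-1}$ tends to $d_i$. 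The main obstacle is that this does \emph{not} collapse to the naive recursion $H_L(1)=d_i\,H_{L\setminus L_i}(1)$: for the three-component link with all pairwise linking numbers equal to $1$ the naive formula yields $2$, whereas the correct cofactor is $3$, because fixing a single variable at $1$ before letting $t\to 1$ alters the leading coefficient. The genuine argument must instead analyze the lowest-order behavior of $\nabla_L$ in all $p$ variables simultaneously near $(1,\dots,1)$, where the degree-$(p-2)$ leading form is multilinear in the linking numbers and assembles into the cofactor $\mathcal{L}_{ij}$ rather than into a single product $\prod_i d_i$. Concretely I would expand $\nabla_L(e^{s_1},\dots,e^{s_p})$ to lowest order, use the full family of Torres conditions (one per variable) to pin down this leading form, and identify it with $\det$ of the reduced linking matrix. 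This simultaneous multivariable expansion is the technical heart of the argument—precisely the Hosokawa–Hartley computation—while the Matrix-Tree step and the base case are routine. Combining the two identities then gives $H_L(1)=\mathcal{L}_{ij}=(-1)^{p-1}\sum_{g\in\mathcal{T}}\bar g$.
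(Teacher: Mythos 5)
First, a point of calibration: the paper does not prove this lemma at all --- it is quoted from \cite{Hartley:1983} and \cite{Hoste:85}, and the only place the relevant Taylor expansion is invoked later (in the proof of Lemma~\ref{Lemma:Main}) it is delegated to \cite{Buryak:2011}. So your attempt is judged on its own merits. Your combinatorial half is complete and correct: $-\mathcal{L}$ is the weighted Laplacian, zero row and column sums force all signed cofactors to agree, and the weighted Matrix--Tree theorem together with $\det(-\mathcal{L}^{ij})=(-1)^{p-1}\det\mathcal{L}^{ij}$ gives $\mathcal{L}_{ij}=(-1)^{p-1}\sum_{g\in\mathcal{T}}\overline{g}$. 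Your diagnosis of the pitfall in the topological half is also exactly right: setting one variable to $1$ via Torres and then letting $t\to 1$ computes a directional limit along a hyperplane, not the diagonal limit defining $H_L(1)$, and your $p=3$ example ($2\neq 3$) is the correct sanity check. The genuine gap is that the step you then defer --- ``expand $\nabla_L(e^{s_1},\dots,e^{s_p})$ to lowest order, use the Torres conditions to pin down the leading form, and identify it with the determinant'' --- is not a routine verification; it \emph{is} the theorem of Hoste (equivalently Buryak's expansion), and nothing in your proposal carries it out. A completion needs three concrete ingredients: (i) an explicit candidate for the degree-$(p-2)$ form, e.g.\ the vertex-weighted tree polynomial $2^{p-2}\sum_{T}\bigl(\prod_{(i,j)\in T}\ell_{ij}\bigr)\prod_k s_k^{\deg_T(k)-1}$; (ii) the check, by Lemma~\ref{Lemma:Torres} plus induction on $p$, that this candidate and $\nabla_L$ have the same restriction to every hyperplane $\{s_i=0\}$; and (iii) the observation that a homogeneous form of degree $p-2$ in $p$ variables vanishing on all $p$ coordinate hyperplanes is divisible by $s_1\cdots s_p$, hence zero --- which simultaneously establishes that the order of vanishing is at least $p-2$ and forces the leading form to equal the candidate. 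As written, the topological identity is asserted, not proved.

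There is also a sign problem you would have collided with had you executed the expansion: the identity you aim at, $H_L(1)=\mathcal{L}_{ij}$, is false for even $p$. For the positive Hopf link, $\nabla_L(t_1,t_2)=1$ (check Torres: $\nabla_L(1,t_2)=(t_2-t_2^{-1})\nabla_{\mathrm{unknot}}(t_2)=1$), so $H_L(1)=1$, while the linking matrix $\begin{pmatrix}-1&1\\1&-1\end{pmatrix}$ has all cofactors equal to $-1$. The statement that is true for every $p$ is $H_L(1)=\sum_{g\in\mathcal{T}}\overline{g}=(-1)^{p-1}\mathcal{L}_{ij}$; that is, your two halves, each done correctly, combine to $H_L(1)=(-1)^{p-1}\mathcal{L}_{ij}$, not to the displayed chain of equalities, so your final sentence (``combining the two identities then gives $H_L(1)=\mathcal{L}_{ij}=(-1)^{p-1}\sum_{g\in\mathcal{T}}\overline{g}$'') cannot hold as stated. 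This discrepancy is invisible in the paper because every link the lemma is applied to --- a dihedral covering link of a $2$-bridge knot, or the sublink $\check{L}_1'$ in the proof of Lemma~\ref{Lemma:Main} --- has an odd number of components, where $(-1)^{p-1}=+1$; but a proof claiming both equalities for arbitrary $p$ is proving something false, and the error would surface precisely in the identification step you left unexecuted.
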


\begin{lem}[Cayley Theorem]
\label{Lemma:Cayley}
The number of trees with $p$ labeled vertices and $(p-1)$ edges in the complete graph $C_p$ is $p^{p-2}$.
\end{lem}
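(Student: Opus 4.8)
The plan is to establish a bijection between the set of trees on the labeled vertex set $\{1, 2, \ldots, p\}$ and the set of sequences of length $p-2$ with entries in $\{1, 2, \ldots, p\}$. Since there are exactly $p^{p-2}$ such sequences, this bijection immediately yields the desired count. This is the classical \emph{Pr\"ufer encoding}, and I would prove it directly rather than invoking any external machinery.

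First I would define the encoding map. Given a tree $T$ on $\{1, \ldots, p\}$, one repeatedly locates the leaf with the smallest label, records the label of its unique neighbor as the next term of the sequence, and deletes that leaf. After $p-2$ iterations exactly two vertices remain, joined by a single edge, and the recorded sequence $(a_1, \ldots, a_{p-2})$ is the Pr\"ufer code of $T$. The crucial combinatorial observation, which I would establish first, is that each label $v$ appears in the Pr\"ufer code exactly $\deg_T(v)-1$ times; in particular the labels that never appear are precisely the leaves of $T$, and the label $p$ (never deletable as a smallest-labeled leaf until the very end) always survives.

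Next I would describe the decoding map and verify that it inverts the encoding. The decoding reconstructs the leaves in the order they were removed: at each stage, the current leaf is the smallest label that is neither already processed nor present in the remaining portion of the code, and it is joined to the first remaining entry of the code, after which both pointers advance. Running this $p-2$ times and then joining the two final unused labels reconstructs a tree, and I would check by induction on $p$ that composing encoding with decoding returns the original tree and vice versa.

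The main obstacle will be proving carefully that these two maps are mutually inverse. The heart of the argument is an invariant: at every stage of the process the set of already-deleted labels, together with the multiset of remaining code entries, uniquely determines the leaf of the current shrinking tree. Verifying this invariant persists under one deletion/reconstruction step is the one place where a genuine (though short) inductive argument is needed; the counting conclusion $p^{p-2}$ is then immediate. I note in passing that, given the linking-matrix and cofactor formalism already in play through Lemma~\ref{Lemma:Hoste}, one could alternatively invoke the Matrix-Tree theorem: the number of spanning trees of the complete graph equals any cofactor of its Laplacian $L = pI - J$, whose nonzero eigenvalues all equal $p$ with multiplicity $p-1$, giving $\tfrac{1}{p}\cdot p^{\,p-1} = p^{p-2}$.
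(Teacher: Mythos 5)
Your proposal is correct, but note that the paper itself offers no proof of this lemma at all: Cayley's formula is stated as a named classical fact and immediately applied in the proof of Lemma~\ref{Lemma:Main}, where the only feature used is that $(p-d)^{p-d-2}$ is odd when $p$ is odd and $d$ is even. So there is no argument in the paper to compare against; what you have supplied is the missing self-contained proof. Your Pr\"ufer encoding is the standard bijective argument and the outline is sound: the degree count (each label $v$ occurs exactly $\deg_T(v)-1$ times in the code), the observation that the undeleted labels absent from the remaining code are precisely the leaves of the current tree, and the induction showing that encoding and decoding are mutually inverse are exactly the three ingredients one must verify, and you have identified the last of these as the genuine content. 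Your closing remark deserves emphasis: since the paper already manipulates cofactors of the linking matrix through Lemma~\ref{Lemma:Hoste}, the Matrix--Tree computation --- any cofactor of the Laplacian $L = pI - J$ of the complete graph, whose nonzero eigenvalues are all equal to $p$ with multiplicity $p-1$, giving $\tfrac{1}{p}\cdot p^{p-1} = p^{p-2}$ --- is arguably the proof most in the spirit of the surrounding text, as it runs through the same determinant formalism the paper uses to evaluate $H_{\check{L}_1'}(1)$; either route is perfectly acceptable here.
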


\begin{lem}
\label{Lemma:Main}
Suppose that $b(p, q)$ and $b(p, q')$ are two inequivalent $2$-bridge knots with an odd integer $p>0$. Let $\widehat{b(p, q)}$, $\widehat{b(p, q')}$ be the corresponding dihedral covering links and $\left( \ell_{ij} \right)$,  $\left( \ell_{ij}' \right)$ be the corresponding linking matrices respectively. We assume that $0 \le \ell_{ii} < \ell_{ii}'$. Then
\[
\nabla_{\widehat{b(p, q)}}(t_1, t_2, \cdots, t_p) \ne  \nabla_{\widehat{b(p, q')}}(t_{\sigma(1)}, t_{\sigma(2)}, \cdots, t_{\sigma(p)})
\]
for any permutation $\sigma \in S_p$.
\end{lem}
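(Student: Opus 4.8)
The plan is to suppose, for contradiction, that
\[
\nabla_{\widehat{b(p, q)}}(t_1, \dots, t_p) = \nabla_{\widehat{b(p, q')}}(t_{\sigma(1)}, \dots, t_{\sigma(p)})
\]
for some $\sigma \in S_p$, and then to extract the diagonal element of the linking matrix from the potential function by means of the Torres condition. First I would record the structural facts already in hand: since the covering linkage matrix is symmetric circulant, the diagonal is constant, so $\ell_{ii} = d$ and $\ell_{ii}' = d'$ with $0 \le d < d'$; every off-diagonal entry is $\pm 1$; each row sums to $0$; and hence $\sum_{j \ne i} \ell_{ij} = -d$ for every $i$. Circulant symmetry also shows that deleting any one component of $\widehat{b(p,q)}$ yields the same link up to isotopy, and likewise for $\widehat{b(p,q')}$, which is what will make the permutation $\sigma$ disappear.

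Next I would apply the Torres condition (Lemma~\ref{Lemma:Torres}) by setting $t_1 = 1$ on both sides and then specializing every surviving variable to a single indeterminate $t$. On the left the Torres factor becomes $\prod_{j \ne 1} t^{\ell_{1j}} - \prod_{j \ne 1} t^{-\ell_{1j}} = t^{-d} - t^{d}$, and on the right, because the variable set to $1$ is $t_1$ (occurring in slot $\sigma^{-1}(1)$ of the right-hand side) while all remaining variables are equalized, circulant symmetry gives the factor $t^{-d'} - t^{d'}$, independent of $\sigma$. Writing $f(t) = \nabla_{\widehat{b(p,q)} \setminus L_1}(t, \dots, t)$ and $g(t) = \nabla_{\widehat{b(p,q')} \setminus L_1}(t, \dots, t)$ (both well defined by the symmetry above), the assumed identity collapses to
\[
(t^{-d} - t^{d})\, f(t) = (t^{-d'} - t^{d'})\, g(t).
\]
Using $\nabla_{L \setminus L_1}(t, \dots, t) = (t - t^{-1})^{p-3} H_{L \setminus L_1}(t)$ for the two $(p-1)$-component sublinks and cancelling the common power of $(t - t^{-1})$, this becomes
\[
\frac{t^{d} - t^{-d}}{t - t^{-1}}\, H(t) = \frac{t^{d'} - t^{-d'}}{t - t^{-1}}\, H'(t),
\]
where $H$ and $H'$ are the Hosokawa polynomials of the two sublinks.

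To finish I would evaluate at $t = 1$. The quantum-integer factors satisfy $\left. \frac{t^{d} - t^{-d}}{t - t^{-1}} \right|_{t=1} = d$, while Lemma~\ref{Lemma:Hoste} identifies $H(1)$ and $H'(1)$ with the signed spanning-tree sums $\tau, \tau'$ of the two sublink adjacency graphs (each a complete graph on $p-1$ vertices with $\pm 1$ weights, having $(p-1)^{p-3}$ trees by Lemma~\ref{Lemma:Cayley}). The identity therefore forces $d\,\tau = d'\,\tau'$. When $d = 0$ the left factor vanishes identically, so the equation reduces to $(t^{-d'} - t^{d'}) g(t) = 0$, hence $g \equiv 0$ and in particular $\tau' = 0$; this case is then settled by proving that the sublink tree sum is nonzero. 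When $d > 0$, comparing top degrees gives $\deg H - \deg H' = d' - d > 0$, and the equation forces the divisibility $\frac{t^{d'} - t^{-d'}}{t^{\gcd(d,d')} - t^{-\gcd(d,d')}} \mid H$, so that $H$ must vanish at prescribed $2d'$-th roots of unity.

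The main obstacle is exactly this last point: controlling the sublink Hosokawa polynomials $H, H'$ sharply enough to contradict $d < d'$. They are genuine link invariants, not determined by the linking matrix alone, so the argument cannot be purely formal. I expect to resolve it by computing the tree sums $\tau, \tau'$ (and, if necessary, the values of $H, H'$ at the roots of unity dictated above) directly from the explicit $\pm 1$ circulant pattern via the matrix-tree theorem together with Lemmas~\ref{Lemma:Hoste} and~\ref{Lemma:Cayley}, using the precise way the diagonal element changes recorded in Theorem~\ref{Thm:diagonal}. This is where the pattern of the change of the diagonal element enters: it should pin down $\tau$ and $\tau'$ tightly enough that $d\,\tau = d'\,\tau'$ can hold only when $d = d'$, yielding the desired contradiction.
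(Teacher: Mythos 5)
Your reduction is sound as far as it goes: applying Torres once at $t_1=1$, equalizing the remaining variables, and using the cyclic symmetry of the dihedral covering link does yield the identity $(t^{-d}-t^{d})\,f(t)=(t^{-d'}-t^{d'})\,g(t)$, hence $d\,H(1)=d'\,H'(1)$ at $t=1$. But your argument stops exactly where the real difficulty begins, and you say so yourself: everything now hinges on evaluating the signed spanning-tree sums $\tau=H(1)$ and $\tau'=H'(1)$ of the two sublinks, and your plan to ``compute them from the explicit circulant pattern'' is not carried out. This is a genuine gap, not a routine verification. Worse, the one tool that makes such a sum controllable without case-by-case computation --- a parity count --- is unavailable in your setting: your sublinks have $p-1$ components ($p$ odd, so $p-1$ even), hence by Cayley's formula the tree sum has $(p-1)^{p-3}$ terms, an \emph{even} number of $\pm 1$'s, which can perfectly well vanish or conspire to satisfy $d\,\tau=d'\,\tau'$; nothing you have written rules this out, and your $d=0$ case needs $\tau'\neq 0$ for the same reason. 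The degree comparison and root-of-unity divisibility constraints are likewise only constraints: $H$ and $H'$ are genuine link invariants not determined by the linking matrices, so no formal contradiction follows from them.

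The paper's proof avoids this trap by deleting not one but $d+1$ components, chosen as $L_{r_0}=L_1$ together with $d$ components having $\mathrm{lk}(L_1,L_{r_i})=-1$. Then the Torres factor attached to the row $r_0=1$ has exponent sum $\sum_{j\notin\{r_0,\dots,r_d\}}\ell_{1j}=-d+d=0$, so the corresponding specialization of $\nabla_{\widehat{b(p,q)}}$ vanishes \emph{identically}; while on the other side, for \emph{any} choice of $d+1$ deleted components, each row sum satisfies $-d'-d\le \sum_i \ell'_{s_j i}\le -d'+d<0$ (this is precisely where the hypothesis $d<d'$ enters), so every Torres factor is nonzero, and the residual factor $\nabla_{\check{L}'}(t,\dots,t)$ is shown to be nonzero near $t=1$ by the Hoste--Cayley argument applied to the $(p-d)$-component link $L'_{s_0}\cup\check{L}'$: since $p-d$ is odd, its tree sum has an odd number $(p-d)^{p-d-2}$ of $\pm 1$ summands, hence is odd and in particular nonzero. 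That parity step is the heart of the proof, and it only becomes available after deleting exactly $d+1$ components; with your single deletion it is structurally out of reach, so the proposal as written cannot be completed along the lines you indicate without substantial new input (an exact evaluation of circulant tree sums, which you would then still have to show violates $d\,\tau=d'\,\tau'$).
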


\begin{rem}
Note that the linking matrix
$\left( \ell_{ij}= \text{lk} (L_i, L_j) \right)$ of an oriented dihedral covering link of a $2$-bridge knot is a symmetric circulant matrix. We also know that  $\ell_{ij}, \ell_{ij}' \in \{1, -1\}$ for $i \ne j$ and $\ell_{ii}$, $\ell_{ii}'$ are multiples of $2$. If we switch an orientation of $S^3$, then all linking numbers switch their signs, so that one of $\widehat{b(p, q)}$ or its mirror image has a linking matrix $\left( \ell_{ij} \right)$ with $\ell_{ii} \ge 0$.
\end{rem}

\begin{proof}[Proof of Lemma~\ref{Lemma:Main}]
Assume that $p=2n+1$ and
\begin{align}
L := \widehat{b(p, q)} = L_1 \cup L_2 \cup \cdots \cup L_p
\ \ {\textrm{ and }} \ \
L' := \widehat{b(p, q')} = L_1' \cup L_2' \cup \cdots  \cup L_p' . \notag
\end{align}
Let $d = \ell_{ii}$ and $d' = \ell_{ii}'$ denote the diagonal elements of the corresponding linking matrices.  Then,
since $| \{ j | \ell_{1j} = -1\} | = d +  | \{ j | \ell_{1j} = 1\} |  \ge d$,
we can select $r_i$ ($ 0 \le i \le d$) satisfying
$1 = r_0  <  r_1< r_2 < r_3 < \cdots <  r_d \le p$ and ${\mathrm{lk}}(L_1, L_{r_i}) =-1$ for all $1 \le i \le d$.
Note that the following equalities hold
\[
\sum_{j  \in  \{1, 2, \cdots, p\} \setminus \{r_0, r_1, r_2, \cdots, r_d\}}   \ell_{1j} =   \sum_{j=2}^p \ell_{1j}  +  d = 0.
\]
Suppose that $\check{L} = L \setminus (L_{r_0} \cup L_{r_1} \cup \cdots \cup L_{r_d})$. Then it is a link with $(p-d -1)$ components and it satisfies
\begin{multline}\label{Eqn:Torres-1}
\nabla_L (t_1, \cdots, t_p) \vert_{ t_{r_0} = t_{r_1} = \cdots = t_{r_d} =1 } \\
= \prod_{j=0}^d \left( \prod_{i \in \{1, 2, \cdots, p\} \setminus \{r_0, r_1, r_2, \cdots, r_d\}} t_i^{\ell_{r_j i}} -  \prod_{i \in \{1, 2, \cdots, p\} \setminus \{r_0, r_1, r_2, \cdots, r_d\}} t_i^{-\ell_{r_j i}} \right) \nabla_{\check{L}}
\end{multline}
and, if we put $t_i = t $ for each $i \in \{1, 2, \cdots, p\} \setminus \{r_0, r_1, \cdots, r_d\}$, Equation (\ref{Eqn:Torres-1}) becomes $0$
for all $t \in {\mathbb R}$ because
\[
  \prod_{i \in \{1, 2, \cdots, p\} \setminus \{r_0, r_1, \cdots, r_d\}} t^{\ell_{1 i}} -  \prod_{i \in \{1, 2, \cdots, p\} \setminus \{ r_0, r_1, \cdots, r_d\}} t^{-\ell_{1 i}}  = t^0 - t^{-0} = 0.
\]

On the other hand we want to show that, for any choice $\{s_0, s_1, \cdots, s_d \}$ satisfying $1 = s_0 < s_1 < \cdots < s_d \le p$, it satisfies
\begin{multline}
\label{Eqn:Torres-2}
\nabla_{L'} (t_1, \cdots, t_p) \vert_{t_{s_0} =  t_{s_1} = \cdots = t_{s_d} =1 } \\
= \prod_{j=1}^d \left( \prod_{i \in \{1, 2, \cdots, p\} \setminus \{s_0, s_1, \cdots, s_d\}} t_i^{\ell'_{s_j i}} -  \prod_{i \in \{1, 2, \cdots, p\} \setminus \{s_0, s_1, \cdots, s_d\}} t_i^{-\ell'_{s_j i}} \right) \nabla_{\check{L}'}
\ne 0
\end{multline}
in some deleted open neighborhood of $1$  after we put $t_i = t $ for each
$i \in \{1, 2, \cdots, p\} \setminus \{s_0, s_1, \cdots, s_d\}$,
where $\check{L}' = L' \setminus (L_{s_0}' \cup L_{s_1}' \cup \cdots \cup L_{s_d}')$.

To see this, first observe that
\[
 -d' -d \le \sum_{i \in \{1, 2, \cdots, p\} \setminus \{s_0, s_1, \cdots, s_d\}}  \ell'_{s_ji} \le -d' +d <0
\]
for each $j=0, 1, \cdots, d$, and it implies that
\[
\prod_{j=0}^d \left( \prod_{i \in \{1, 2, \cdots, p\} \setminus \{s_0, s_1, \cdots, s_d\}} t^{\ell'_{s_j i}} -  \prod_{i \in \{1, 2, \cdots, p\} \setminus \{s_0, s_1, \cdots, s_d\}} t^{-\ell'_{s_j i}} \right)  \ne 0.
\]
Therefore it suffices to show that
\[
\nabla_{\check{L}'} (t, t, \cdots, t)  \ne 0
\]
for some real number $t \in {\mathbb R}$.

Suppose that $\check{L}_1' = L_{s_0}' \cup \check{L}'$. Then $\check{L}_1'$ is a link of $(p-d)$ components with a linking matrix  $\left( \ell_{ij}' \right)$ which satisfies
$\ell_{ij}' \in \{1, -1\}$ for $i\ne j$ and $\ell_{ii}' > 0$ for each $i$.
Let us compare the Hosokawa polynomials of $\check{L}_1'$ and  $\check{L}'$.
Then, by the Torres' condition (\ref{Lemma:Torres}), we get
\[
\nabla_{\check{L}_1'} (1, t, \cdots, t) = - (t^{\ell'} - t^{-\ell'})
\nabla_{\check{L}'} (t, \cdots, t)
\]
for some integer $\ell' = \ell'_{11} >0$ and, by the definition of Hosokawa polynomial,
we have
\begin{align}
\label{Eqn:Hosokawa-2}
 \frac{\nabla_{\check{L}_1'}(1, t, \cdots, t)}{ (t - t^{-1})^{p-d-2}}
 & = -\frac{(t^{\ell'} - t^{-\ell'}) \nabla_{\check{L}'}(t, \cdots , t)}{(t - t^{-1})^{p-d-2}} \\
 &= -(t^{\ell' -1} + t^{\ell' -2} + \cdots + t^{-\ell' +1})   \frac{\nabla_{\check{L}'}(t, \cdots , t)}{(t - t^{-1})^{p-d-3}} \notag\\
&= -(t^{\ell' -1} + t^{\ell' -2} + \cdots + t^{-\ell' +1}) H_{\check{L}'} (t).  \notag
\end{align}
Now, by sending $t$ to $1$, the left hand side of (\ref{Eqn:Hosokawa-2}) becomes
\[
\lim_{t\to 1} \frac{\nabla_{\check{L}_1'}(1, t, \cdots, t)}{ (t - t^{-1})^{p-d-2}}  = H_{\check{L}_1'}(1).
\]
Hence, by Lemma~\ref{Lemma:Hoste}, we get
\[
H_{\check{L}_1'} (1) = (-1)^{p-d-1} \sum_{g\in \mathcal{T}} \overline{g},
\]
where $\overline{g} \in \{1, -1 \}$ because each linking number is $1$ or $-1$. Note that the number of trees in the weighted adjacent graph of $\check{L}_1'$ is $(p-d)^{p-d-2}$  by the Cayley Theorem (\ref{Lemma:Cayley}). Since $p$ is an odd integer and $d$ is an even integer,  $(p-d)^{p-d-2}$  is an odd integer and therefore  $H_{\check{L}_1'}(1) \ne 0$.  Hence it implies
\[
0 \ne H_{\check{L}_1'}(1)  = -\lim_{t\to 1} (t^{\ell' -1} + t^{\ell' -2} + \cdots + t^{-\ell' +1}) H_{\check{L}'} (t) = -(2\ell' -1) H_{\check{L}'}(1).
\]
That is, we have
\[
H_{\check{L}'}(1)  \ne 0.
\]
If we consider the Taylor expansion of $H_{\check{L}'}(t)$ at $t=1$ as in \cite{Buryak:2011}, then
\begin{align*}
\nabla_{\check{L}'} (t, \cdots,t) & = (t-t^{-1})^{p-d -3} H_{\check{L}'}( t)  \\
&= (t-t^{-1})^{p-d -3} \{ H_{\check{L}'}(1)  + \mathcal{O}(t-1) \} \ne 0
\end{align*}
in  some open deleted neighborhood of $1 \in {\mathbb R}$. Therefore
\[
\nabla_{L'} (t_1, \cdots, t_p) \vert_{t_{s_0} =  t_{s_1} = \cdots = t_{s_d} =1  \atop t_j = t \text{ for } j \in \{1, 2, \cdots, p\} \setminus \{s_0, s_1, \cdots s_d\} }   \ne 0
\]
for any choice of \( 1 = s_0 < s_1 < \cdots < s_d \le p \).
Hence we conclude that
\[
\nabla_L (t_1, t_2 \cdots, t_p) \ne \nabla_{L'}(t_{\sigma(1)}, t_{\sigma(2)}, \cdots, t_{\sigma(p)})
\]
for any choice of $\sigma \in S_p$, the permutation group of $\{ 1, 2, \cdots, p\}$.
\end{proof}

\smallskip

\begin{thm}
For each integer $n>0$, the following two symplectic $4$-manifolds
\[
\{ X_{b((2n+1)(4n+1)(4n+3), -(4n+1)(4n+3) + \varepsilon 2(2n+1))}  \ | \ \varepsilon = 1 \text{ or } -1\}
\]
are nondiffeomorphic, but they have the same Seiberg-Witten invariant.
\end{thm}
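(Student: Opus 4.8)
The plan is to recognize the two manifolds in the statement as $X_{K_n(+1)}$ and $X_{K_n(-1)}$, where $K_n(\pm 1)$ are the fibered $2$-bridge knots of Construction~1. Indeed, Lemma~\ref{Lemma:const-1} gives
\[
K_n(\pm 1) = b\bigl((2n+1)(4n+1)(4n+3),\, -(4n+1)(4n+3) \pm 2(2n+1)\bigr),
\]
so the two values $\varepsilon = \pm 1$ in the statement correspond exactly to these two knots. It then remains to establish two things: (i) that $X_{K_n(+1)}$ and $X_{K_n(-1)}$ carry the same Seiberg-Witten invariant, and (ii) that they are not diffeomorphic.

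For (i), I would invoke Theorem~\ref{Theorem:Covering-Method}(a), which expresses $\mathcal{SW}_{X_{b(p,q)}} = \Delta(\tau)\Delta(-\tau)$ (with $\tau = \exp([F])$) purely in terms of the symmetrized Alexander polynomial of the underlying $2$-bridge knot. Hence it suffices to note that $K_n(+1)$ and $K_n(-1)$ have the same Alexander polynomial. These knots are precisely the specialization $a_i \equiv 1$ of $K(a,\pm1)$ in Lemma~\ref{Lemma:Same-Alex}, and the skein computation in the proof of that lemma yields $\Delta_{K(a,1)} = \Delta_{K(a,-1)}$; therefore $\Delta_{K_n(+1)} = \Delta_{K_n(-1)}$ and the two Seiberg-Witten invariants coincide.

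The substance of the theorem is (ii), which I would prove by contradiction using the universal covers. Since $\pi_1(X_{b(p,q)}) = \mathbb{Z}_p$ by Theorem~\ref{Theorem:Covering-Method}(a), any diffeomorphism $X_{K_n(+1)} \to X_{K_n(-1)}$ lifts to a diffeomorphism of the simply connected universal covers $\widehat{X}_{K_n(+1)} \to \widehat{X}_{K_n(-1)}$, carrying the torus classes $[F_i]$ of one to those of the other up to a permutation $\sigma \in S_p$ (and up to an overall orientation sign, normalized as in the Remark following Lemma~\ref{Lemma:Main} so that the diagonal entries are nonnegative). By Theorem~\ref{Theorem:Covering-Method}(b) the Seiberg-Witten invariant of each cover is $\Delta_{\widehat{b(p,q)}}(t_1,\ldots,t_p)\prod_{i}(t_i^{1/2} - t_i^{-1/2})$; the product factor is symmetric under $S_p$ and common to both manifolds, so such a lifted diffeomorphism would force
\[
\Delta_{\widehat{K_n(+1)}}(t_1, \ldots, t_p) = \Delta_{\widehat{K_n(-1)}}(t_{\sigma(1)}, \ldots, t_{\sigma(p)})
\]
for some $\sigma \in S_p$, equivalently (via Hartley's formula relating $\Delta$ to the Conway potential function) an equality of the corresponding potential functions up to the permutation $\sigma$. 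I would then contradict this using Lemma~\ref{Lemma:Main}: by Corollary~\ref{Lemma:diag-Torus} the diagonal elements of the two dihedral covering linking matrices are $d(K_n(+1)) = 2n+2$ and $d(K_n(-1)) = 2n-2$, which satisfy $0 \le 2n-2 < 2n+2$ for every $n > 0$, so the hypotheses of Lemma~\ref{Lemma:Main} are met and $\nabla_{\widehat{K_n(-1)}} \ne \nabla_{\widehat{K_n(+1)}}^{\sigma}$ for all $\sigma$, the desired contradiction.

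The genuinely hard input is already packaged in Lemma~\ref{Lemma:Main} together with the diagonal-element computation of Theorem~\ref{Thm:diagonal} and Corollary~\ref{Lemma:diag-Torus}; once these are granted, the only remaining care is the covering-space bookkeeping in (ii) — verifying that a diffeomorphism of the $\mathbb{Z}_p$-quotients genuinely lifts to the simply connected covers, that the Seiberg-Witten basic classes of the (symplectic, $b^+>1$) covers form an honest diffeomorphism invariant permuted by the induced action on $H^2$, and that this permutation of the classes $[F_i]$ is exactly an element $\sigma \in S_p$ so that the common symmetric factor $\prod_i(t_i^{1/2}-t_i^{-1/2})$ drops out of the comparison. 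I expect this bookkeeping, rather than any new computation, to be the only delicate point in assembling the final argument.
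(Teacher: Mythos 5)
Your proposal is correct and follows essentially the same route as the paper: the paper's proof likewise passes to the covering link surgery $4$-manifolds $\widehat{X}$, cites Corollary~\ref{Lemma:diag-Torus} for the differing diagonal elements and Lemma~\ref{Lemma:Main} for the resulting inequality of multivariable Alexander polynomials up to permutation, and concludes nondiffeomorphism of the quotients ``by a covering argument.'' The only difference is one of detail, not of substance: you spell out the lifting-of-diffeomorphisms bookkeeping and the Seiberg--Witten equality via Theorem~\ref{Theorem:Covering-Method}(a) and the skein computation of Lemma~\ref{Lemma:Same-Alex}, which the paper leaves implicit.
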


\begin{proof}
Let us consider the corresponding link surgery $4$-manifolds
\[
\{ \widehat{X}_{b((2n+1)(4n+1)(4n+3), -(4n+1)(4n+3) + \varepsilon 2(2n+1))}  \ | \ \varepsilon = 1 \text{ or } -1\}.
\]
Since the corresponding two dihedral covering links have different diagonal elements by Corollary~\ref{Lemma:diag-Torus} above, they have different multivariable Alexander polynomials by Lemma~\ref{Lemma:Main}, so that they are nondiffeomorphic. Hence we get the result by a covering argument.
\end{proof}

\smallskip

\begin{rem}
Note that $n=1$ case is the Fintushel-Stern's example in~\cite{FS:99}.
\end{rem}

Finally we get our main result by using a similar argument.

\begin{thm}
For each integer $n>0$,  at least $\left[\frac{n+1}{2}\right] + 1$ symplectic $4$-manifolfds in the following families
\[
\{ X_{b(p(n), q(n, i))}  \ | \ i = 0,1, 2, \cdots, 2^n -1\}
\]
are mutually nondiffeomorphic, but they all have the same Seiberg-Witten invariant.
\end{thm}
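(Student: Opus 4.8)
The plan is to split the statement into two independent parts: that all $2^n$ manifolds $X_{b(p(n),q(n,i))}$ carry a single common Seiberg--Witten invariant, and that a carefully chosen subfamily of exactly $\left[\frac{n+1}{2}\right]+1$ of them is mutually nondiffeomorphic. Throughout I would use that every entry of the word $W(n,i)$ is $\pm1$, so each $K(n,i)$ is a fibered $2$-bridge knot by Lemma~\ref{Lemma:2-bridge}(b); this is what makes the symplectic covering construction of Theorem~\ref{Theorem:Covering-Method} applicable.

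For the first part I would prove by induction on $n$ that all $K(n,i)=b(p(n),q(n,i))$ share one Alexander polynomial, using Lemma~\ref{Lemma:Same-Alex}. The case $n=0$ is trivial. For the inductive step, the recursion in Definition~\ref{Definition:KnotExample} realizes each $K(n,i)$ as one of $K(a,\pm1)$ in the notation of Lemma~\ref{Lemma:Same-Alex}, with $a=W(n-1,i_{n-1})$ and sign $(-1)^{\varepsilon_{n-1}+1}$, where $i_{n-1}=\sum_{j=0}^{n-2}\varepsilon_j 2^j$. Given any two indices $i,i'$ with base words $a=W(n-1,i_{n-1})$ and $b=W(n-1,i'_{n-1})$, the inductive hypothesis gives $\Delta_{D(a)}=\Delta_{D(b)}$, so Lemma~\ref{Lemma:Same-Alex} forces all four of $K(a,\pm1),K(b,\pm1)$ to have equal Alexander polynomials; in particular $\Delta_{K(n,i)}=\Delta_{K(n,i')}$. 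Since Theorem~\ref{Theorem:Covering-Method}(a) shows $\mathcal{SW}_{X_{b(p,q)}}=\Delta(\tau)\Delta(-\tau)$ depends only on this Alexander polynomial, all $2^n$ manifolds have identical Seiberg--Witten invariants, and all are homeomorphic, being rational homology K3 surfaces with $\pi_1=\mathbb{Z}_{p(n)}$.

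For the second part I would exploit that $\pi_1(X_{b(p,q)})=\mathbb{Z}_{p(n)}$ is the same for every $i$, so any diffeomorphism between two of these manifolds lifts to a diffeomorphism of their universal $p(n)$-fold covers $\widehat{X}_{b(p,q)}$. By Theorem~\ref{Theorem:Covering-Method}(b) the Seiberg--Witten invariant of each cover is governed by the Conway potential (equivalently the multivariable Alexander polynomial) of the dihedral covering link $\widehat{b(p,q)}$, so it suffices to distinguish these Conway potentials. Lemma~\ref{Lemma:Main} does exactly this whenever the diagonal elements of the two linking matrices satisfy $0\le \ell_{ii}<\ell'_{ii}$; the diagonal element is precisely $d(b(p,q))$, and Corollary~\ref{Lemma:diag-ni} records the full list $\{d(K(n,i))\}=\{2-2n+4j:0\le j\le n\}$.

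The final and subtlest step is the selection of representatives together with the orientation bookkeeping, and this is where I expect the main obstacle to lie. A diffeomorphism of covers need not preserve orientation, and an orientation reversal corresponds to mirroring the covering link, which flips every off-diagonal linking number and hence sends $d\mapsto -d$, while the symmetrized Conway potential is insensitive to the accompanying simultaneous inversion of variables. Consequently the genuine diffeomorphism invariant furnished by Lemma~\ref{Lemma:Main} is the \emph{absolute value} $|d(b(p,q))|$, not $d$ itself. I would therefore select one index $i$ for each distinct value of $|d(K(n,i))|$; by Corollary~\ref{Lemma:diag-ni} there are exactly $\left[\frac{n+1}{2}\right]+1$ such values. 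For any two chosen representatives I normalize orientations (replacing a knot by its mirror if necessary) so that both diagonals are nonnegative and unequal, apply Lemma~\ref{Lemma:Main} to conclude their covering Conway potentials differ under every permutation of variables, and hence deduce $\widehat{X}_{b(p,q)}\not\cong\widehat{X}_{b(p,q')}$ and therefore $X_{b(p,q)}\not\cong X_{b(p,q')}$. It is exactly this reconciliation of the sign of the diagonal element with orientation-reversing diffeomorphisms that forces the count to be the number of distinct \emph{absolute} diagonal values $\left[\frac{n+1}{2}\right]+1$, rather than the number $n+1$ of distinct diagonal values.
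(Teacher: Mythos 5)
Your proposal is correct and takes essentially the same route as the paper: Corollary~\ref{Lemma:diag-ni} supplies the $\left[\frac{n+1}{2}\right]+1$ distinct nonnegative (equivalently, distinct absolute) diagonal values, Lemma~\ref{Lemma:Main} (together with the mirroring remark that makes both diagonals nonnegative) converts these into distinct multivariable Alexander polynomials of the dihedral covering links, and the covering argument transfers nondiffeomorphism of the covers $\widehat{X}_{b(p(n),q(n,i))}$ down to the manifolds $X_{b(p(n),q(n,i))}$. Your explicit induction via Lemma~\ref{Lemma:Same-Alex} for equality of the Seiberg--Witten invariants, and your orientation bookkeeping explaining why the count is governed by $|d|$ rather than $d$, merely spell out steps the paper leaves implicit.
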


\begin{proof}
Let $d(n,i)$ be a diagonal element of the corresponding dihedral covering link of a $2$-bridge knot $b(p(n), q(n, i))$. Then Corollary~\ref{Lemma:diag-ni} above implies that
\[
| \{ d(n, i) | d(n, i) \ge 0 \} | = \left[\frac{n+1}{2}\right] + 1.
\]
Therefore, by Lemma~\ref{Lemma:Main}, we can get at least $\left[\frac{n+1}{2}\right] + 1$ inequivalent dihedral covering links which have different multivariable Alexander polynomials. So at least $\left[\frac{n+1}{2}\right] + 1$ link surgery $4$-manifolds in the following families
\[
\{ \widehat{X}_{b(p(n), q(n, i))}  \ | \ i = 0,1, 2, \cdots, 2^n -1\}.
\]
are mutually nondiffeomorphic.  Hence we get a desired result by a covering argument again.
\end{proof}

\medskip

%\bibliographystyle{amsalpha}
%\bibliography{NonDiffeom-20130129}

\providecommand{\bysame}{\leavevmode\hbox to3em{\hrulefill}\thinspace}
\providecommand{\MR}{\relax\ifhmode\unskip\space\fi MR }
% \MRhref is called by the amsart/book/proc definition of \MR.
\providecommand{\MRhref}[2]{%
  \href{http://www.ams.org/mathscinet-getitem?mr=#1}{#2}
}
\providecommand{\href}[2]{#2}

\end{document}